\documentclass{compositio}

\usepackage{amsfonts, amssymb, amsmath, latexsym}
\usepackage[all]{xy}

\newtheorem{thm}{Theorem}[section]
\newtheorem{defn}[thm]{Definition}
\newtheorem{prop}[thm]{Proposition}

\newtheorem{lemma}[thm]{Lemma}
\newtheorem{remark}[thm]{Remark}

\newcommand{\Z}{\mathrm{Z}}
\newcommand{\D}{\mathrm{D}}

\newcommand{\E}{\mathrm{E}}
\newcommand{\LL}{\mathrm{L}}
\newcommand{\X}{\mathrm{X}}
\newcommand{\V}{\mathrm{V}}
\newcommand{\pv}{\mathbf{P}(\V)}

\begin{document}

\title{Automorphisms of Drinfeld half-spaces over a
    finite field}
\author{Bertrand R\'emy}
\email{remy@math.univ-lyon1.fr}
\address{Universit\'e de Lyon\\Universit\'e Lyon1-CNRS\\Institut Camille Jordan - UMR5208\\43 bd. du 11 novembre 1918 \\ F-69622 Villeurbanne cedex}
\author{Amaury Thuillier}
\email{thuillier@math.univ-lyon1.fr}
\address{Universit\'e de Lyon\\Universit\'e Lyon1-CNRS\\Institut Camille Jordan - UMR5208\\43 bd. du 11 novembre 1918 \\ F-69622 Villeurbanne cedex}
\author{Annette Werner}
\email{werner@mathematik.uni-frankfurt.de}
\address{Institut f\"ur Mathematik \\Goethe-Universit\"at Frankfurt\\Robert-Mayer-Str. 6-8\\ D-60325 Frankfurt a.M.}

\classification{14G22, 14G15, 14E05}
\keywords{Non-archimedean analytic geometry, Berkovich spaces, Drinfeld upper-half space, birational transformations}

\begin{abstract}
We show that the automorphism group of Drinfeld's half-space over a finite field is the projective linear group of the underlying vector space. The proof of this result uses analytic geometry in the sense of Berkovich over the  finite field equipped with the trivial valuation. We also take into account extensions of the base field.
\end{abstract}

\maketitle

\section*{Introduction}
\label{s - intro}
In this note we determine the automorphism group of Drinfeld's half-spaces over a finite field.
Given a finite-dimensional vector space ${\rm V}$ over a finite field $k$, the Drinfeld half-space $\Omega(\V)$ is defined as the complement of all $k$-rational hyperplanes in the projective space $\pv$; it is an affine algebraic variety over $k$.  We show that every $k$-automorphism of $\Omega({\rm V})$ is induced by a $k$-automorphism of $\pv$. Hence the automorphism group of $\Omega(\V)$ is equal to ${\rm PGL}({\V})$.

More generally, for an arbitrary field extension ${\rm K}$ of $k$, we prove that the natural injection of ${\rm PGL}({\rm V})$ into ${\rm Aut}_{\rm K}(\Omega({\rm V}) \otimes_k {\rm K})$ is an isomorphism. Our result answers a question of Dat, Orlik and Rapoport \cite[p.~338]{DOR} which was motivated by the analogous statement for Drinfeld half-spaces over a  non-Archimedean local field (with non-trivial absolute value). 

Drinfeld defined his $p$-adic upper half-spaces in \cite{drin}.
They are the founding examples of the theory of period domains \cite{razi}.
Analogs of period domains over finite fields have been studied by Rapoport in \cite{rap}.
They are open subvarieties of flag varieties characterized by a semi-stability condition.
Recently, they have been studied  by Rapoport, Orlik and others, see e.g. \cite{orl01}, \cite{orlra}.
A good introduction is given in the book \cite{DOR}.

Over local non-Archimedean fields with non-trivial absolue value, Drinfeld half-spaces are no longer algebraic varieties and must be defined in the context of analytic geometry. In this setting, it was shown by Berkovich that every automorphism is induced by a projective linear transformation \cite{Berkovich_Drinfeld}. This was generalized to products of Drinfeld half-spaces by Alon \cite{Alon}, who also pointed out and corrected a discrepancy in Berkovich's proof. Berkovich's strategy is based on the fact that in the case of a local non-Archimedean ground field with non-trivial absolute value, the Bruhat-Tits building of the group ${\rm PGL}({\V})$ is contained in $\Omega(\V)$ as the subset of points satisfying a natural maximality condition. This implies that every automorphism of $\Omega(\V)$ induces an automorphism of the Bruhat-Tits building, and with some further work (see \cite{Alon}) one can show the claim. 

One could in fact use a similar strategy in order to determine the
automorphism group of $\Omega(\V)$ over a finite field. Indeed, if we
endow the finite ground field with the trivial absolute value and look
at the corresponding Berkovich analytic space $\Omega(\V)^{\rm an}$,
by \cite{Berkovich_Book}, the \emph{vectorial building} associated to
the group ${\rm PGL}({\V})$ is contained in $\Omega(\V)^{\rm
  an}$. We believe that one can then follow Berkovich's and Alon's arguments to deduce that this automorphism comes from an element of ${\rm PGL}({\rm V})$.

However, in this note, we adopt a slightly different, and maybe more natural, viewpoint. Thereby, we want to highlight that the true content of this theorem is about \emph{extension} of automorphisms, and that it has in fact very little to do with buildings, see Remark \ref{remark-complex}.
Our approach is the following. We consider the space ${\rm X}$ obtained by
blowing up all $k$-rational linear subspaces of the projective space
$\pv$. Irreducible components of the boundary divisor correspond
bijectively to linear subspaces of $\pv$. Moreover, a family of
components has non-empty intersection if and only if the corresponding
linear subspaces form a flag. We use Berkovich analytic geometry to
prove in Proposition \ref{prop-analytic} that every automorphism of
$\Omega(\V)$ preserves the set of discrete valuations on the function
field induced by boundary components of ${\rm X}$. Hence by
Proposition \ref{prop-valuations} it extends to an automorphism of
${\rm X}$. By taking a closer look at the Chow ring of ${\rm X}$ in section 3, we deduce that this automorphism preserves the set of discrete valuations corresponding to hyperplanes, which allows us to conclude that it induces an automorphism of the projective space.

\smallskip

\begin{acknowledgements} We thank Vladimir Berkovich for aquainting us
with the results in \cite{Alon}. We also thank Carlo Gasbarri for suggesting an alternative argument for the last step of our proof, and St\'ephane Lamy for an interesting discussion on birational geometry. We are grateful to the referee for a number of helpful corrections and suggestions how to make the paper more accessible.
\end{acknowledgements}

\section{Automorphisms of Drinfeld's half-spaces}

\label{section-auto}

Let $k$ be a finite field and let ${\rm V}$ be a $k$-vector space.
We denote by $\mathbf{P}({\rm V})$ the projective scheme ${\rm Proj}\left({\rm Sym}^\bullet {\rm V} \right)$ and define the $k$-scheme $\Omega(\V)$ as the complement of all
(rational) hyperplanes in $\mathbf{P}({\rm V})$:
$$\Omega(\V)= \mathbf{P}({\rm V}) \ - \bigcup_{{\tiny \begin{array}{c} {\rm W} \subset {\rm V} \\
{\rm dim}~{\rm W}=1 \end{array}}} \mathbf{P}({\rm V}/{\rm W}).$$
For every field extension ${\rm K}/k$  we denote by $\V_{\rm K} = \V
\otimes_k {\rm K}$ the induced vector space over ${\rm K}$. Then the
base change $\Omega(\V)_{\rm K} = \Omega(\V) \otimes_k {\rm K}$ is the
complement of all $k$-rational hyperplanes in $\mathbf{P}({\rm V}_{\rm
  K}) =  \mathbf{P}(\V) \otimes_k {\rm K}$. 

\smallskip

The main result of this note is the following.

\begin{thm}\label{main-theorem} Let ${\rm V}$ be a vector space of
  finite dimension over a finite field $k$. 
\begin{itemize}
\item[(i)]~The restriction map
$${\rm PGL}({\V}) = \mathrm{Aut}_k\bigl(\mathbf{P}({\rm V})\bigr)
  \rightarrow {\rm Aut}_k \bigl(\Omega(\V) \bigr),\ \
\varphi \mapsto \varphi_{|\Omega(\V)}$$
is an isomorphism.
Equivalently, every $k$-automorphism of \ $\Omega(\V)$ extends to a $k$-automorphism of $\mathbf{P}({\rm V})$.
\item[(ii)]~For every field extension  ${\rm K}/k$  the natural map 
$${\rm PGL}({\rm V}) \longrightarrow {\rm Aut}_{\rm
  K}\bigl(\Omega(\V)_{\rm K}\bigr)$$  
is an isomorphism. Equivalently, every ${\rm K}$-automorphism of  $\Omega(\V)_{\rm K}$ comes by base change from a $k$-automorphism of $\mathbf{P}({\rm V})$.
\end{itemize}
\end{thm}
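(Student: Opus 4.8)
The plan is to establish the non‑trivial direction (surjectivity) in both (i) and (ii) by extending a given automorphism first to the iterated blow‑up $\X$ of $\pv$ along all $k$‑rational linear subspaces, and then descending it down to $\pv$. Injectivity is immediate: $\Omega(\V)$ is a dense open subscheme of $\pv$ (and $\Omega(\V)_{\rm K}$ dense in $\pv\otimes_k{\rm K}$), so an element of ${\rm PGL}(\V)$ restricting to the identity there is the identity; combined with the faithfulness of the ${\rm PGL}(\V)$–action this gives injectivity of both maps. Throughout, $\X$ denotes the De~Concini--Procesi wonderful model of the hyperplane arrangement cutting out $\Omega(\V)$: it is smooth and projective, the boundary $\X\smallsetminus\Omega(\V)$ is a normal crossings divisor $\sum_L D_L$ whose components are indexed by the $k$‑rational linear subspaces $L\subsetneq\pv$, with $D_{L_1}\cap\dots\cap D_{L_r}\neq\emptyset$ precisely when $L_1,\dots,L_r$ form a flag; each $D_L$ induces a divisorial valuation $v_L$ on $k(\pv)=k\bigl(\Omega(\V)\bigr)$. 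For (ii) one works over ${\rm K}$ with $\X\otimes_k{\rm K}$, whose boundary components are still indexed by the $k$‑rational linear subspaces.

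So fix $\varphi\in{\rm Aut}_k\bigl(\Omega(\V)\bigr)$ (resp.\ $\varphi\in{\rm Aut}_{\rm K}\bigl(\Omega(\V)_{\rm K}\bigr)$). The first — and, I expect, the main — step is to prove that $\varphi$ permutes the set $\{v_L\}$ of boundary valuations; this is Proposition~\ref{prop-analytic}, and it is where non‑archimedean geometry enters. I would endow $k$ with the trivial absolute value and pass to the Berkovich analytification $\Omega(\V)^{\rm an}$; using the theory of analytic spaces over a trivially valued field (skeleta, retractions, the $\mathbb{R}_{>0}$‑scaling flow), one should be able to single out the divisorial valuations $v_L$ intrinsically among the points ``at infinity'' of $\Omega(\V)^{\rm an}$ — for instance as the divisorial points lying in the closure of the canonical skeleton of $\Omega(\V)^{\rm an}$ but not in $\Omega(\V)^{\rm an}$ itself, this description being manifestly invariant under the homeomorphism $\varphi^{\rm an}$ attached to $\varphi$. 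Making this precise, so that ``intrinsic'' really is intrinsic and genuinely uses the geometry of Berkovich spaces over a finite field, is the delicate point; everything downstream is classical birational geometry and linear algebra.

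Granting this, Proposition~\ref{prop-valuations} promotes $\varphi$ to a biregular automorphism $\Phi$ of $\X$ (resp.\ of $\X\otimes_k{\rm K}$). The mechanism is birational: since $\varphi$ and $\varphi^{-1}$ both permute $\{v_L\}$, the birational self‑map $\varphi\colon\X\dashrightarrow\X$ is an isomorphism in codimension one, and by the rigidity of the wonderful model — the normalized graph $Z\to\X$ of $\varphi$ is then a projective birational morphism contracting no divisor, hence an isomorphism because $\X$ is $\mathbb{Q}$‑factorial — it is in fact regular, and so is its inverse. The automorphism $\Phi$ then permutes the irreducible boundary components $D_L$, and correspondingly the valuations $v_L$.

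It remains to descend $\Phi$ to $\pv$. Here one inspects the Chow ring of $\X$ (Section~3): the intersection‑theoretic behaviour of the classes $[D_L]$ depends on $\dim L$ in a way that singles out the hyperplane divisors $D_H$ (already for $\X\to\mathbb{P}^2$ one computes $D_H^2=-q$ against $D_p^2=-1$), so $\Phi$ maps hyperplane divisors to hyperplane divisors; equivalently $\varphi$ permutes the valuations ${\rm ord}_H$ for $H$ a $k$‑rational hyperplane, via some permutation $\sigma$. Finally, $\mathcal{O}\bigl(\Omega(\V)\bigr)$ is generated by the ratios $\ell/\ell'$ of linear forms defining $k$‑rational hyperplanes; for each such ratio $\varphi^\ast(\ell/\ell')$ is a unit on $\Omega(\V)$ whose divisor on $\pv$ is therefore supported on the $k$‑rational hyperplanes and equals $[\sigma H_\ell]-[\sigma H_{\ell'}]$, hence is again, up to a scalar in $k^\ast$, a ratio of linear forms. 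A cocycle argument in these scalars together with a dimension count on the $(\dim\V)$‑dimensional space spanned by the functions $\ell/\ell_{H_0}$ for a fixed $H_0$ shows that $\varphi^\ast$ restricts there to a linear isomorphism onto the analogous space for $\sigma H_0$, hence is induced by an element $g\in{\rm PGL}(\V)$; then $g^\ast$ and $\varphi^\ast$ agree on a generating set of $\mathcal{O}\bigl(\Omega(\V)\bigr)$, so $\varphi=g_{|\Omega(\V)}$, proving (i). For (ii) the same computation produces $g\in{\rm PGL}(\V_{\rm K})$ permuting the finite set $\mathbf{P}(\V^{\vee})(k)$ of $k$‑rational hyperplanes; since this set contains a projective frame defined over $k$, the element $g$ is already defined over $k$, i.e.\ lies in ${\rm PGL}(\V)$. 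The exceptional low‑dimensional cases $\dim\V\le 2$ are settled directly.
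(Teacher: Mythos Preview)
Your architecture matches the paper's, but two steps contain real gaps.

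For the extension to $\X$: the claim ``isomorphism in codimension one plus $\mathbb{Q}$-factorial target implies isomorphism'' is false---flops between smooth projective varieties are isomorphisms in codimension one, and the normalized graph of a flop does contract a divisor over each side. The actual mechanism behind Proposition~\ref{prop-valuations} is that the boundary stratification yields a cover of $\X$ by \emph{affine} open sets $\U_{\Z}$ (Lemma~\ref{lemma-blow-up}), and a rational map from a noetherian normal scheme to an affine target, defined at every height-one point, extends everywhere. For this one needs $\varphi^*$ to preserve not merely the set $\{v_{\rm L}\}$ but the flag incidences (the simplicial structure on $\Gamma(\V)$), so that affine charts are carried to affine charts; the paper extracts this from the conical structure of the fan $\mathfrak{S}(\V)\subset\Omega(\V)^{\rm an}$, which in turn rests on Lemma~\ref{lemma-fan}.

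For the Chow-ring step: intersection theory does \emph{not} single out the hyperplane components once $n\geqslant 3$. The invariant the paper actually controls, ${\rm rk}\,{\rm CH}^1(\E_{\rm L})$ (Lemma~\ref{lemma-rank}), is symmetric under $d\leftrightarrow n-1-d$, so it only shows that $\hat\varphi({\rm H})$ is a hyperplane \emph{or} a rational point; your self-intersection computation is special to $n=2$. The paper breaks this residual ambiguity by induction on $n$: if $\hat\varphi({\rm H})=p$, the induced isomorphism $\E_{\rm H}\cong\E_p$ is, by the theorem in dimension $n-1$ (both sides being blow-ups of a $\mathbf{P}^{n-1}$ along its full hyperplane arrangement), linear, hence carries rational points of ${\rm H}$ to lines through $p$; Lemma~\ref{lemma-rank} then gives a contradiction for $n\geqslant 3$. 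A non-inductive alternative via the canonical class ${\rm K}_\X$ is sketched in the Remark following the proof. Without one of these two ingredients, your descent to $\pv$ is incomplete.
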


\vskip1mm This result holds trivially if ${\rm dim} {\rm V} \leqslant 1$, for then $\Omega({\rm V}) = \mathbf{P}({\rm V})$. From now on, we assume that ${\rm V}$ has dimension at least $2$ and we set $n = {\rm dim} {\rm V}-1$.

The proof combines analytic geometry in the sense of Berkovich with  algebraic arguments.
As a first step we show that every $k$-automorphism of $\Omega(\V)$ can be extended to an automorphism of the $k$-scheme ${\rm X}$ we get by blowing-up all linear subspaces of $\mathbf{P}(\V)$.  For this step we use Berkovich analytic geometry over the field $k$ endowed with the trivial absolute value. The second step is of an algebraic nature and consists in checking that this automorphism of ${\rm X}$ is induced by a $k$-automorphism of $\mathbf{P}({\rm V})$. Here we analyze the geometry of the boundary divisor more closely and use an induction argument.

\smallskip

Given a proper subvector space ${\rm W}$ of $\V$, applying ${\rm
  Proj}$ to the natural map ${\rm Sym}^\bullet ({\rm
  V})\twoheadrightarrow {\rm Sym}^\bullet ({\rm V}/{\rm W})$ leads to
a closed immersion $\mathbf{P}({\rm V}/{\rm W}) \hookrightarrow
\mathbf{P}({\rm V})$ whose image ${\rm L}$ is called a \emph{linear subspace}~of $\mathbf{P}({\rm V})$.
Such a subscheme is said to be trivial if ${\rm L} = \varnothing$ or ${\rm L} = \mathbf{P}({\rm V})$; it is called a \emph{hyperplane}~if it is of codimension 1.
We denote by $\mathcal{L}^i({\rm V})$ the set of linear subspaces of dimension $i$ in $\mathbf{P}({\rm V})$, and by $\displaystyle \mathcal{L}({\rm V}) = \bigcup_{0 \leqslant i \leqslant n-1} \mathcal{L}^i({\rm V})$ the set of non-trivial linear subspaces.

\begin{defn}
\label{defi - distinguished divisors}
We denote by $\pi : {\rm X} \rightarrow \mathbf{P}({\rm V})$ the blow-up of $\mathbf{P}({\rm V})$ along the full hyperplane arrangement. To be precise, ${\rm X}$ is defined as
$$\xymatrix{{\rm X} = {\rm X}_{n-1} \ar@{->}[r]^{\pi_{n-1}} & {\rm X}_{n-2} \ar@{->}[r] & \ldots
\ar@{->}[r] & {\rm X}_1 \ar@{->}[r]^{\pi_1} & {\rm X}_0
\ar@{->}[r]^{\pi_0\hskip6mm} & {\rm X}_{-1} = \mathbf{P}({\rm V})}$$
with
$$\pi = \pi_0\circ \pi_1 \circ \ldots \circ \pi_{n-1},$$
where $\pi_i$ denotes the blow-up of \ ${\rm X}_{i-1}$ along the strict transforms of linear subspaces of $\mathbf{P}({\rm V})$ of dimension $i$.
\end{defn}

The scheme ${\rm X}$ is projective and smooth over $k$.
It contains $\Omega(\V)$ as an open dense subscheme since each $\pi_i$ induces an isomorphism over $\Omega(\V)$. We write ${\rm D} = {\rm X} - \Omega(\V)$ for the complement.

Note  that $\pi_{n-1}$ is an isomorphism and that the strict transforms of two distinct linear subspaces ${\rm L}, {\rm L}'\subset~\mathbf{P}({\rm V})$ of dimension $i$ in ${\rm X}_{i-1}$ are disjoint since (the strict transform of) ${\rm L} \cap {\rm L}'$ has been previously blown-up.

Each non-trivial linear subspace ${\rm L} \subset \mathbf{P}({\rm V})$ defines a smooth and irreducible hypersurface ${\rm  E}_{\rm L}$ in ${\rm X}$ as follows. If $\LL$ has dimension $i$, its strict transform by $\pi_0\circ \pi_1 \circ \ldots \circ \pi_{i-1}$ in ${\rm X}_{i-1}$ (by convention $\LL$ itself if it is a point) is blown-up under the map $\pi_i: \X_i \rightarrow \X_{i-1}$ to give rise to a hypersurface ${\rm E}_{\rm L}^{(i)}$ in ${\rm X}_i$. The (codimension 1) subscheme ${\rm E}_{\rm L}$ of $\X$ is then the
strict transform of ${\rm E}_{\rm L}^{(i)}$ by $\pi_{i+1} \circ \ldots \circ \pi_{n-1}$. The induced map ${\rm E}_{\rm L} \rightarrow {\rm E}_{\rm L}^{(i)}$ coincides with the blow-up of ${\rm E}_{\rm L}^{(i)}$ along the hypersurface arrangement induced by hyperplanes of $\mathbf{P}({\rm V})$ containing ${\rm L}$. We have an alternative description of $\E_\LL$ as the closure $$\displaystyle \overline{\pi^{-1}\Bigl(\LL - \bigcup_{{\tiny \begin{array}{c} {\rm L}' \in \mathcal{L}({\rm V}) \\ \LL' \subsetneq \LL \end{array}}} \LL'\Bigr)}$$ taken in $\X$.

It follows from the construction of ${\rm X}$ that the boundary divisor $\D$ is the union of all hypersurfaces $\E_\LL$, i.e. we have  
$${\rm D}
  =  \pi^{-1}\Bigl(\bigcup_{{\tiny \begin{array}{c} {\rm W} \subset
        {\rm V} \\ {\rm dim}~{\rm W}=1 \end{array}}} \mathbf{P}({\rm
    V}/{\rm W})\Bigr)=\bigcup_{\rm L} {\rm E}_{\rm L}.$$
Two  components ${\rm E}_{\rm L}, {\rm E}_{\rm L'}$ have non-empty
intersection if and only if ${\rm L} \subset {\rm L}'$ or ${\rm L}'
\subset {\rm L}$. Indeed, if none of the inclusions holds, then ${\rm
  L}$ and ${\rm L}'$ intersect along a smaller linear subspace, say of
dimension $i$, and the strict transforms of ${\rm L}$ and ${\rm L}'$
in ${\rm X}_i$ are disjoint. It follows that a familly of components
has non-empty intersection if and only if it is indexed by linear
subspaces lying in a flag. We define the stratum ${\rm Z}_{\mathcal{F}}$ corresponding
to a flag $\mathcal{F}$ by:
$${\rm Z}_{\mathcal{F}} = \bigcup_{{\rm L} \in \mathcal{F}} {\rm
  E}_{\rm L} - \bigcup_{{\rm L}' \notin \mathcal{F}} {\rm E}_{{\rm L}'}.$$ 
  
\smallskip
\vskip2mm \begin{lemma} \label{lemma-blow-up} The divisor ${\rm D}$ has simple normal
  crossings. Moreover, if
  \ ${\rm Z} = {\rm Z}_{\mathcal{F}}$ is the stratum corresponding to the flag $\mathcal{F}$,
  then $${\rm U}_{\rm Z} = {\rm X} - \bigcup_{{\rm L} \notin
    \mathcal{F}} {\rm E}_{\rm L}$$ is an affine open subset of \ ${\rm X}$ containing ${\rm Z}$ as a closed subset.
\end{lemma}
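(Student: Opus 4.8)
The plan is to reduce both assertions to one explicit local computation, showing that for a suitable flag a certain chart of $\X$ is an affine space in which the relevant boundary components appear as coordinate hyperplanes.

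\emph{Reduction to a complete flag.} Every flag $\mathcal{F}$ of linear subspaces of $\pv$ refines to a maximal one $\widehat{\mathcal{F}} = \{\LL_0 \subsetneq \LL_1 \subsetneq \ldots \subsetneq \LL_{n-1}\}$ with $\dim \LL_j = j$; this works over any field. Since $\mathcal{F} \subseteq \widehat{\mathcal{F}}$ we have $\U_{\Z_{\mathcal{F}}} \subseteq \U_{\Z_{\widehat{\mathcal{F}}}}$, so it suffices to understand the latter. I would pick homogeneous coordinates $x_0, \ldots, x_n$ on $\pv$ with $\LL_j = \{x_{j+1} = \ldots = x_n = 0\}$ and work in the affine chart $\{x_0 \neq 0\} \cong \mathbf{A}^n$, with coordinates $u_i = x_i / x_0$; note that $\Omega(\V)$ lies in this chart since $\{x_0 = 0\}$ is one of the deleted hyperplanes, and that on $\Omega(\V)$ one has $u_i \neq 0$ for all $i$.

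\emph{The chart computation.} This is carried out by induction along the tower $\X_{-1} \to \X_0 \to \ldots \to \X_{n-1} = \X$. At the $i$-th stage the strict transforms of the $i$-dimensional linear subspaces are pairwise disjoint (as already recorded above), so blowing them all up looks, near the strict transform of $\LL_i$, like the single blow-up of $\LL_i$; selecting at each step the affine chart of that blow-up pointing towards $\LL_{i+1}$, one obtains a chart $\mathbf{A}^n$ of $\X$ with coordinates $\xi_1, \ldots, \xi_n$ determined by $u_j = \xi_1 \xi_2 \cdots \xi_j$, in which $\E_{\LL_{j-1}}$ is the hyperplane $\{\xi_j = 0\}$ for $1 \leqslant j \leqslant n$. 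Using $\Omega(\V) \subseteq \{\xi_1 \cdots \xi_n \neq 0\}$ together with $\X - \Omega(\V) = \bigcup_{\LL} \E_\LL$, one checks that the complement of this chart in $\X$ is a union of components $\E_\LL$ with $\LL \notin \widehat{\mathcal{F}}$ (none of the $\E_{\LL_j}$ occurs, since it meets the chart), so that $\U_{\Z_{\widehat{\mathcal{F}}}} \subseteq \mathbf{A}^n$, and that for every $\LL$ the intersection $\E_\LL \cap \mathbf{A}^n$ is either empty or an irreducible hypersurface of $\mathbf{A}^n$. This bookkeeping along the blow-up tower is the step I expect to cost the most.

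\emph{Conclusion.} For the normal crossings statement, fix $x \in \D$ and put $\mathcal{F}_x = \{\LL : x \in \E_\LL\}$; by the discussion preceding the lemma this is a flag, and refining it to a maximal flag $\widehat{\mathcal{F}}$ gives $x \in \U_{\Z_{\widehat{\mathcal{F}}}} \subseteq \mathbf{A}^n$, where the only boundary components through $x$ are the coordinate hyperplanes $\E_{\LL_{j-1}} = \{\xi_j = 0\}$ with $\LL_{j-1} \in \mathcal{F}_x$; these are smooth and transverse, so $\D$ has simple normal crossings. For the affineness of $\U_{\Z_{\mathcal{F}}}$ with $\mathcal{F}$ arbitrary, refine it to $\widehat{\mathcal{F}}$: then $\U_{\Z_{\mathcal{F}}} = \mathbf{A}^n - \bigcup_{\LL \notin \mathcal{F}} (\E_\LL \cap \mathbf{A}^n)$ is the complement in $\mathbf{A}^n$ of a finite union of irreducible hypersurfaces, hence of a single hypersurface $\{g = 0\}$, so it is affine. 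Finally $\Z = \Z_{\mathcal{F}}$ is contained in $\U_{\Z_{\mathcal{F}}} = \X - \bigcup_{\LL \notin \mathcal{F}} \E_\LL$ by definition, and it is closed there, being the trace on $\U_{\Z_{\mathcal{F}}}$ of the closed set $\bigcup_{\LL \in \mathcal{F}} \E_\LL$. (Alternatively, all these facts are special cases of the general theory of wonderful models of subspace arrangements in the sense of De Concini and Procesi.)
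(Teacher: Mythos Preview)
Your overall strategy coincides with the paper's, but the chart computation contains a real gap. The affine space $\mathbf{A}^n$ with coordinates $\xi_1,\ldots,\xi_n$ that you build by iterated \emph{single} blow-ups along $\widehat{\mathcal{F}}$ is not an open subscheme of $\X$: it is a chart of the auxiliary variety $\mathrm{Y}$ obtained by blowing up $\pv$ only along the chosen flag. Your locality clause (``near the strict transform of $\LL_i$, blowing up all $i$-dimensional subspaces looks like the single blow-up of $\LL_i$'') is valid only on a neighbourhood avoiding the other $i$-dimensional strict transforms, and the full $\xi$-chart you select is not such a neighbourhood. For instance, with $n=2$ over $\mathbf{F}_2$ your stage-$0$ chart still contains the points $[1{:}1{:}0]$ and $[1{:}1{:}1]$, which must be blown up to reach $\X_0$. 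Consequently the expression ``$\E_\LL\cap\mathbf{A}^n$'' for $\LL\notin\widehat{\mathcal{F}}$ of codimension $\geqslant 2$ does not parse: $\E_\LL$ is an exceptional divisor of $\X$ with no analogue in $\mathrm{Y}$, so it cannot be a hypersurface in your chart.

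The paper closes exactly this gap. It introduces $\mathrm{Y}$ explicitly, uses the universal property of blow-ups to get a morphism $\X\to\mathrm{Y}$, and shows by a parallel induction along the two towers that this morphism restricts to an isomorphism from $\U_{\Z_{\widehat{\mathcal{F}}}}$ onto a \emph{principal open subset} $\{g\neq 0\}\subsetneq\mathbf{A}^n$ of your chart, where $g$ is an explicit product of affine-linear polynomials in the $\xi_j$ cutting out the strict transforms of all hyperplanes not in $\widehat{\mathcal{F}}$. Once $\U_{\Z_{\widehat{\mathcal{F}}}}\cong\{g\neq 0\}$ is established, your concluding arguments go through with $\{g\neq 0\}$ in place of $\mathbf{A}^n$: the $\E_{\LL_{j-1}}$ are the traces of the coordinate hyperplanes $\{\xi_j=0\}$, giving simple normal crossings, and for a partial $\mathcal{F}\subseteq\widehat{\mathcal{F}}$ one removes further coordinate hyperplanes, which preserves affineness. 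Your parenthetical appeal to De~Concini--Procesi wonderful models is a legitimate shortcut, but the explicit argument as you wrote it needs this correction.
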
  

\begin{proof} We start by considering a complete
flag $\mathcal{F} = ({\rm L}_0, \ldots, {\rm L}_n)$. In order to get an  explicit description of ${\rm U}_{\rm Z}$ in
this case, we first compare ${\rm X}$ to the blow-up ${\rm Y}$ of
$\mathbf{P}({\rm V})$ along $\mathcal{F}$. To be precise, we define:
$$\xymatrix{p : {\rm Y} = {\rm Y}_{n-1} \ar@{->}[r]^{p_{n-1}} & {\rm Y}_{n-2} \ar@{->}[r] & \ldots
  \ar@{->}[r] & {\rm Y}_1 \ar@{->}[r]^{p_1} & {\rm Y}_0
  \ar@{->}[r]^{p_0\hskip6mm} & {\rm Y}_{-1} = \mathbf{P}({\rm V})}$$
where $p_i$ denotes the blow-up of ${\rm Y}_{i-1}$ along the strict
transform of ${\rm L}_i$. By the universal property of
blow-up, there exists a (unique) morphism of towers
$f_\bullet : {\rm X}_\bullet \rightarrow {\rm Y}_\bullet$. 

\vskip1mm Now, we want to show that $f$ identifies ${\rm U}_{\rm Z}$
with the complement ${\rm W}_{\rm Z}$ in ${\rm Y}$ of the strict
transforms of all linear subspaces not contained in
$\mathcal{F}$. Note that ${\rm W}_{\rm Z}$ is also the complement of
the strict transform of all hyperplanes distinct from ${\rm L}_{n-1}$. We
argue by induction along the towers of blow-ups. For every $i \in \{-1, \ldots, n-1\}$, we define two open subsets ${\rm U}_i \subset
{\rm X}_i$ and ${\rm W}_i \subset {\rm Y}_i$ as follows: 
\begin{itemize}
\item[-] ${\rm U}_{-1}
= {\rm W}_{-1}$ is the complement in $\mathbf{P}({\rm V})$ of all
$0$-dimensional linear subspaces distinct from ${\rm L}_0$;
\item[-] if $0 \leqslant i \leqslant n-2$, then ${\rm U}_i$
  (resp. ${\rm W}_i$) is the complement in $\pi_i^{-1}({\rm U}_{i-1})$
  (resp. in $p_i^{-1}({\rm W}_{i-1})$) of the strict transforms of all
  $(i+1)$-dimensional linear subspaces ${\rm L} \subset
  \mathbf{P}({\rm V})$ not in $\mathcal{F}$;
\item[-] ${\rm U}_{n-1} = \pi_{n-1}^{-1}({\rm U}_{n-2})$ and ${\rm
  W}_{n-1} = p_{n-1}^{-1}({\rm W}_{n-2})$.
\end{itemize}
Arguing by induction on $i$, we see that ${\rm U}_i = f_i^{-1}({\rm
  W}_i)$, and that $f_i$
induces an isomorphism between ${\rm U}_i$ and ${\rm
  W}_i$ respecting the restrictions of exceptional divisors. It is
clear that $${\rm U}_{n-1} = {\rm U}_{\rm Z} = {\rm X} - \bigcup_{{\rm L} \notin
  \mathcal{F}} {\rm E}_{\rm L}.$$ On the other hand, we claim that ${\rm W}_{n-1}$
coincides with ${\rm W}_{\rm Z}$. The inclusion ${\rm W}_{n-1}
\subset {\rm W}_{\rm Z}$ is obvious. For every point $y \in {\rm Y} -
        {\rm W}_{n-1}$ 
        there exists an index
        $i \in \{-1,\ldots, n-2\}$ such that the image $y_i$ of $y$ in
        ${\rm Y}_i$ lies in the strict transform of a $(i+1)$-dimensional linear subspace ${\rm L}
        \subset \mathbf{P}({\rm V})$ distinct from ${\rm
          L}_{i+1}$. Let us consider a hyperplane ${\rm H}$ which contains
        ${\rm L}$. By construction,
        $y_i$ is contained in the strict transform of ${\rm H}$ in
        ${\rm Y}_i$. Since ${\rm L}_j \not\subset {\rm
          H}$ for $j \in \{i, \ldots, n-2\}$, the subspaces ${\rm
          L}_j$ and ${\rm H}$ are transverse. Blowing-up along some
         smooth subschemes can only decrease the order of contact,
         hence the strict transform $\widetilde{\rm H}$ of
        ${\rm H}$ in ${\rm Y}_j$ is transverse to the center of $p_{j+1}$.
        This implies that the strict transform of ${\rm H}$ in ${\rm Y}_{j+1}$ coincides with the inverse image of $\widetilde{\rm
          H}$ in ${\rm Y}_{j+1}$. It follows that $y$ belongs to the
        strict transform of ${\rm H}$ in ${\rm Y}$, and thus $y
        \in {\rm Y} - {\rm W}_{\rm Z}$. This proves the converse
        inclusion ${\rm W}_{\rm Z} \subset {\rm W}_{n-1}$.
     
\vskip1mm
Given a basis $(e_0, e_1,\ldots, e_n)$ of ${\rm V}$ such that ${\rm L}_i = {\rm Z}(e_{i+1}, \ldots, e_n)$ for
every $i \in \{0,\ldots, n-1\}$, we have a commutative diagram $$\xymatrix{{\rm
    Spec}(k[t_1,\ldots,t_n]) \ar@{^{(}->}[r]^{\hskip 15mm j}
  \ar@{->}[d]_q & {\rm Y} \ar@{->}[d]^p \\ {\rm Spec}(k[x_1,\ldots,
    x_n]) \ar@{^{(}->}[r] & \mathbf{P}({\rm V})}$$ where the horizontal
 arrows are open immersions identifying $t_1,\ldots, t_n$
 (resp. $x_1,\ldots, x_n$) with the rational functions
 $e_1/e_0, \ldots,  e_n/e_{n-1}$ (resp. $e_1/e_0, \ldots,
 e_n/e_0$) and where $q$ is the morphism defined by $q^*(x_i) =
 \prod_{j \leqslant i} t_j$. 

 Via $j$, the open subscheme ${\rm
   W}_{\rm Z}$ of ${\rm Y}$ is isomorphic to the principal open subset
 ${\rm D}(f)$ of ${\rm
   Spec}~(k[t_1,\ldots, t_n])$, where $$f = \prod_{i=1}^{n} \ \prod_{(a_i,\ldots, a_n) \in
 k^{n-i+1}}
 (1 + a_i t_i + a_{i+1} t_i t_{i+1} + \ldots + a_n t_i \ldots t_n).$$
 
In particular, ${\rm W}_{\rm Z}$ is affine. Moreover, the intersection of the exceptional divisor of $p$ with the open affine set
${\rm W}_{\rm Z}$ coincides with ${\rm div}(t_1 \cdots t_n)$, hence
has simple normal crossings. Using the isomorphism between ${\rm
  U}_{\rm Z}$ and ${\rm W}_{\rm Z}$ induced by $f$, we deduce that
${\rm U}_{\rm Z}$ is affine and that ${\rm D} \cap {\rm U}_{\rm Z}$ has simple
normal crossings. Since the sets ${\rm
  U}_{\rm Z}$ for all choices of complete flags form an open affine
covering of ${\rm X}$, the divisor ${\rm D}$ has simple normal
crossings on ${\rm X}$. 

We now claim that the intersection $\Sigma$ of any familly of $d$ irreducible
components of ${\rm D}$ is either empty or irreducible. Indeed, assume
that $\Sigma$ is non-empty and reducible. Non-emptyness
amounts to saying that these components correspond to linear subspaces in
some flag $\mathcal{F}$. Pick a complete flag $\mathcal{F}'$
containing $\mathcal{F}$. In the corresponding affine chart ${\rm
  U}_{\rm Z}$, the intersection of the $d$ components which we consider
is irreducible, hence there must be a component $\Sigma_0$ of $\Sigma$ which lies
in ${\rm X} - {\rm U}_{\rm Z}$. Since, by construction, ${\rm X} - {\rm
  U}_{\rm Z}$ is the union of some irreducible components of ${\rm D}$,
we see that $\Sigma_0$ must be contained in a $(d+1)$-th irreducible
component of ${\rm D}$. But this contradicts the normal crossing
property of ${\rm D}$. In view of the discussion before Lemma 1.3,
this shows that the strata of ${\rm D}$ are in one-to-one
correspondence with flags of linear subspaces.

If we start with a stratum ${\rm Z}$ corresponding to a partial flag $\mathcal{F}$, the set $ {\rm U}_{\rm Z} = {\rm X} - \bigcup_{{\rm L} \notin
    \mathcal{F}} {\rm E}_{\rm L}$ is the intersection of all ${\rm
  U}_{\Z'}$ for strata ${\rm Z}'$ corresponding to complete flags containing $\mathcal{F}$. Hence it is open affine as a finite intersection of open affines in a separated $k$-scheme. \end{proof}
  
In order to extend an automorphism of $\Omega(\V)$ to first $\X$ and
then to $\mathbf{P}(\V)$, we look at its action on the discrete valuations associated to the components of $\D$. For each ${\rm L} \in \mathcal{L}({\rm V})$, the local ring at the generic point of the hypersurface ${\rm E}_{\rm L}$ is a discrete valuation ring in the function field $\kappa({\rm V})$ of $\X$. We denote by ${\rm ord}_{\rm L}$ the corresponding discrete valuation on $\kappa({\rm V})$, and we write 
\[\Gamma({\rm V}) = \{{\rm ord}_{\rm L}: {\rm L} \in
  \mathcal{L}({\rm V}) \}\]
  for the set of all these valuations. Note that  $\kappa({\rm V})$ is the function field of both $\mathbf{P}({\rm V})$ and $\Omega(\V)$. If ${\rm L}$ is a \emph{hyperplane} in $\mathbf{P}({\rm V})$, then the valuation ${\rm ord}_\LL$ is the one given by the local ring of $\mathbf{P}(\V)$ at the generic point of ${\rm L}$. 

\vskip2mm The sets $\mathcal{L}({\rm V})$ and $\Gamma({\rm V})$ come with a natural
simplicial structure, for which the $q$-simplices correspond to flags
of linear subspaces of length $q-1$.

\begin{prop}\label{prop-valuations} Let $\varphi$ be a $k$-automorphism of \ $\Omega({\rm V})$ and let $\varphi^*$ be the induced automorphism of the set of valuations on the function field $\kappa({\rm V})$.

\begin{itemize}
\item[(i)] The birational map $\varphi$ extends to a $k$-automorphism of \ ${\rm X}$ if and only
  if $\varphi^*$ preserves the set $\Gamma({\rm V})$ and its simplicial structure.

\item[(ii)] The birational map extends to a $k$-automorphism of \ $\mathbf{P}({\rm
V})$ if and only if $\varphi^*$
preserves the subset of \ $\Gamma({\rm V})$ defined by hyperplanes.
\end{itemize}
\end{prop}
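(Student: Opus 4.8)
The plan is to deduce both equivalences from a single local description of rings of functions on opens sandwiched between $\Omega(\V)$ and its compactifications. Since $\X$ and $\pv$ are smooth, hence normal, for any open ${\rm W}$ with $\Omega(\V)\subseteq{\rm W}\subseteq\X$ (resp.\ ${\rm W}\subseteq\pv$) one has $\mathcal{O}({\rm W})=\bigcap_{\rm Y}\mathcal{O}_{{\rm W},{\rm Y}}$, the intersection running over the prime divisors ${\rm Y}$ of ${\rm W}$. A prime divisor meeting $\Omega(\V)$ imposes no condition on a rational function already lying in $\mathcal{O}(\Omega(\V))$; a prime divisor ${\rm Y}$ disjoint from $\Omega(\V)$ has its closure in $\X$ contained in ${\rm D}=\bigcup_\LL\E_\LL$ (resp.\ in the union of the $k$-rational hyperplanes of $\pv$), hence, being irreducible of codimension one, equals some $\E_\LL$ (resp.\ a hyperplane), so that ${\rm ord}_{\rm Y}={\rm ord}_\LL$. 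Consequently, for such ${\rm W}$,
$$\mathcal{O}({\rm W})=\bigl\{\,g\in\mathcal{O}(\Omega(\V))\ :\ {\rm ord}_\LL(g)\geqslant 0\ \text{for every }\LL\text{ with }\E_\LL\cap{\rm W}\neq\varnothing\,\bigr\}$$
when ${\rm W}\subseteq\X$, and the evident analogue holds on $\pv$ with $k$-rational hyperplanes in place of the $\E_\LL$. I would feed into this the affine charts supplied by Lemma \ref{lemma-blow-up} and, on $\pv$, the complements of hyperplanes.

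For part (i), the direct implication is formal: an extension $\widetilde\varphi\in{\rm Aut}_k(\X)$ of $\varphi$ preserves $\Omega(\V)$, hence ${\rm D}$, hence permutes the components $\E_\LL$ together with their intersection patterns, and since a family of the $\E_\LL$ has non-empty intersection precisely when indexed by a flag (discussion before Lemma \ref{lemma-blow-up}), $\varphi^*$ preserves $\Gamma(\V)$ and its simplicial structure. For the converse, suppose $\varphi^*$ acts on $\Gamma(\V)$ through a simplicial automorphism $\sigma$ of $\mathcal{L}(\V)$; then $\sigma$ carries complete flags to complete flags. Fix a complete flag $\mathcal{F}$ and set ${\rm Z}={\rm Z}_{\mathcal{F}}$. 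By Lemma \ref{lemma-blow-up}, ${\rm U}_{\rm Z}=\X-\bigcup_{\LL\notin\mathcal{F}}\E_\LL$, so $\E_\LL\cap{\rm U}_{\rm Z}\neq\varnothing$ exactly for $\LL\in\mathcal{F}$ and the displayed identity becomes $\mathcal{O}({\rm U}_{\rm Z})=\{g\in\mathcal{O}(\Omega(\V)):{\rm ord}_\LL(g)\geqslant 0\ \text{for all }\LL\in\mathcal{F}\}$. As $\varphi^*$ preserves $\mathcal{O}(\Omega(\V))$ and sends $\{{\rm ord}_\LL:\LL\in\mathcal{F}\}$ to $\{{\rm ord}_\LL:\LL\in\sigma\mathcal{F}\}$, it identifies $\mathcal{O}({\rm U}_{{\rm Z}_{\mathcal{F}}})$ with $\mathcal{O}({\rm U}_{{\rm Z}_{\sigma\mathcal{F}}})$; in other words $\varphi$ extends to an isomorphism between these two affine charts of $\X$. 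These charts cover $\X$ as $\mathcal{F}$ runs over complete flags (Lemma \ref{lemma-blow-up}), and the resulting partial extensions agree on overlaps because they all restrict to $\varphi$ on the dense open $\Omega(\V)$ and $\X$ is separated; they therefore glue to a morphism $\X\to\X$ extending $\varphi$. Applying the same construction to $\varphi^{-1}$ yields a two-sided inverse, hence an automorphism of $\X$.

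For part (ii), the direct implication is again formal: an extension $\psi\in{\rm Aut}_k(\pv)$ of $\varphi$ permutes the $k$-rational hyperplanes, so $\varphi^*=\psi^*$ permutes the associated valuations ${\rm ord}_{\rm H}$, which constitute precisely the distinguished subset of $\Gamma(\V)$. Conversely, suppose $\varphi^*$ permutes this subset, inducing a permutation $\tau$ of the set of $k$-rational hyperplanes. The affine opens $\pv-{\rm H}\cong{\mathbb A}^n$ cover $\pv$, since the $k$-rational hyperplanes have empty common intersection (already the coordinate ones do, as $n\geqslant 1$); each contains $\Omega(\V)$ as a dense open, and the analogue of the displayed identity gives $\mathcal{O}(\pv-{\rm H})=\{g\in\mathcal{O}(\Omega(\V)):{\rm ord}_{{\rm H}'}(g)\geqslant 0\ \text{for all }k\text{-rational hyperplanes }{\rm H}'\neq{\rm H}\}$. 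Since $\varphi^*$ preserves $\mathcal{O}(\Omega(\V))$ and realises $\tau$ on hyperplane valuations, it identifies $\mathcal{O}(\pv-{\rm H})$ with $\mathcal{O}(\pv-\tau{\rm H})$; gluing the resulting isomorphisms as in part (i) produces an automorphism $\psi$ of $\pv$ extending $\varphi$.

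The heart of the matter — and the step I expect to need the most care — is the local identity displayed above: one must argue that a rational function regular on $\Omega(\V)$ can only acquire poles along components of ${\rm D}$ (upstairs on $\X$), equivalently along $k$-rational hyperplanes (downstairs on $\pv$), so that extendability across any one of the chosen charts is tested by the \emph{finitely many} valuations in $\Gamma(\V)$ rather than by arbitrary divisorial valuations of $\kappa(\V)$. Granting this together with the affine chart covers of Lemma \ref{lemma-blow-up}, both equivalences reduce to the bookkeeping above; the only remaining subtleties — that a simplicial automorphism of $\mathcal{L}(\V)$ preserves completeness of flags, and that the chartwise extensions glue — are immediate from density of $\Omega(\V)$ and separatedness of the ambient scheme.
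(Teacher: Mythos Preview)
Your proposal is correct and follows essentially the same approach as the paper. Both arguments use the affine charts ${\rm U}_{\rm Z}$ from Lemma~\ref{lemma-blow-up} (and the opens $\pv-{\rm H}$ for part~(ii)) together with normality to reduce extendability to a height-one condition; the paper phrases this as ``the rational map ${\rm U}_{{\rm Z}'}\dashrightarrow{\rm U}_{\rm Z}$ is defined at every point of height~$1$'' and invokes \cite[20.4.12]{ega4}, while you unpack the same fact by computing $\mathcal{O}({\rm U}_{\rm Z})$ explicitly as the subring of $\mathcal{O}(\Omega(\V))$ cut out by the valuations ${\rm ord}_\LL$ with $\LL\in\mathcal{F}$---two formulations of the same idea.
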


\begin{proof} (i) The condition is necessary because the simplicial
set $\Gamma({\rm V})$ describes the incidence relations between
irreducible components of ${\rm D}$ (Lemma \ref{lemma-blow-up}). To see that
it is sufficient, we use the covering of ${\rm X}$ by the open affine
subsets $${\rm U}_{\rm Z} =  {\rm X} - \bigcup_{{\rm L} \notin
  \mathcal{F}} {\rm E}_{\rm L}$$
where ${\rm Z}$ denotes a stratum of ${\rm D}$ and $\mathcal{F}$ is
the corresponding flag of linear subspaces of $\mathbf{P}({\rm
  V})$. If $\varphi$ preserves $\Gamma({\rm V})$ with its simplicial
structure, then there exists for every stratum ${\rm Z}$ another stratum
${\rm Z}'$ such that the rational map $${\rm U}_{{\rm Z}'}
\dashrightarrow {\rm U}_{\rm Z}$$
induced by $\varphi$ is defined at each point of height $1$.

Since
${\rm U}_{\rm Z}$ is affine and ${\rm U}_{{\rm Z}'}$ is noetherian and
normal, this rational map is everywhere defined on ${\rm U}_{{\rm
    Z}'}$ \cite[20.4.12]{ega4} and therefore $\varphi$ extends to an
automorphism from ${\rm X}$ to ${\rm X}$ (apply this argument to $\varphi^{-1}$).

(ii) If the morphism $\varphi:\Omega(\V) \rightarrow \Omega(\V)$ preserves all
valuations ${\rm ord}_\LL$ coming from hyperplanes, then for every
hyperplane $\LL$ in $\pv$ there exists a hyperplane $\LL'$ such that
the rational map $$\pv - \LL' \dashrightarrow \pv - \LL$$ induced by
$\varphi$ is defined at every point of height $1$, and the conclusion follows as for (i). \end{proof}

\section{Step 1 -- Valuations and analytic geometry}

This section is devoted to the first step toward the theorem, namely
the fact that every $k$-automorphism of $\Omega(\V)$ extends to a
$k$-automorphism of ${\rm X}$. 

\begin{prop}
\label{prop-analytic}
Let ${\rm Aut}_k({\rm X},{\rm D})$ denote the group of $k$-automorphisms of \ ${\rm X}$ which preserve ${\rm D}$.
The canonical map
$${\rm Aut}_k({\rm X},{\rm D}) \rightarrow {\rm Aut}_k\bigl(\Omega(\V)\bigr), \ \ \varphi \mapsto \varphi_{|\Omega(\V)}$$
is an isomorphism.
Equivalently, every $k$-automorphism of $\Omega(\V)$ extends to a $k$-automorphism of \ ${\rm X}$.
\end{prop}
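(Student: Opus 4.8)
The plan is to combine Proposition~\ref{prop-valuations}(i) with an intrinsic, compactification-free description of $\Gamma(\V)$ coming from Berkovich analytic geometry over $k$ equipped with the trivial absolute value. Injectivity of the restriction map is immediate, since $\Omega(\V)$ is dense in ${\rm X}$ and a $k$-automorphism of ${\rm X}$ is determined by its restriction to a dense open subscheme; moreover the extension of a $\varphi \in {\rm Aut}_k(\Omega(\V))$ to ${\rm X}$, being an automorphism mapping the open subscheme $\Omega(\V)$ onto itself, automatically preserves ${\rm D} = {\rm X} \smallsetminus \Omega(\V)$, so it lies in ${\rm Aut}_k({\rm X},{\rm D})$. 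Thus, via Proposition~\ref{prop-valuations}(i), the whole statement reduces to showing: for every $\varphi \in {\rm Aut}_k(\Omega(\V))$, the induced automorphism $\varphi^*$ of the set of real-valued valuations on $\kappa(\V)$ maps $\Gamma(\V)$ bijectively onto itself and respects the simplicial structure given by flags.

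I pass to Berkovich spaces over $(k, |\cdot|_{\mathrm{triv}})$. The automorphism $\varphi$ induces a homeomorphism $\varphi^{\rm an}$ of $\Omega(\V)^{\rm an}$, which contains the generic point $\eta$ of $\Omega(\V)$ together with, lying over it, all real-valued valuations on $\kappa(\V)$ trivial on $k$; in particular each ${\rm ord}_\LL$ gives a point $\xi_\LL \in \Omega(\V)^{\rm an}$. The key point is that the \emph{bounded locus}
$$\Omega(\V)^{\beth} \ = \ \{\, x \in \Omega(\V)^{\rm an} \ : \ |g(x)| \leqslant 1 \text{ for every } g \in \mathcal{O}(\Omega(\V)) \,\}$$
depends only on the affine $k$-algebra $\mathcal{O}(\Omega(\V))$, hence is stable under $\varphi^{\rm an}$, and so is its complement, the \emph{outer part} $\Omega(\V)^{\rm an} \smallsetminus \Omega(\V)^{\beth}$. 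Comparing with the reduction map ${\rm X}^{\rm an} \to {\rm X}$ attached to the proper model ${\rm X}$, one sees that the outer part is exactly the set of points of $\Omega(\V)^{\rm an}$ reducing into the boundary ${\rm D}$, and that $\xi_\LL$ is the distinguished (monomial) point reducing to the generic point of ${\rm E}_\LL$. So $\Gamma(\V)$ is realised as a set of divisorial points inside the intrinsically defined outer part.

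It then remains to single out $\Gamma(\V)$, with its simplicial structure, among the points of the outer part by means depending only on $\Omega(\V)^{\rm an}$. For this I would use the structure theory of analytic spaces over a trivially valued field: $\Omega(\V)^{\rm an}$ deformation retracts onto a cone complex --- the dual complex of the simple normal crossing divisor ${\rm D}$ --- whose rays are generated by the points $\xi_\LL$, whose cones correspond to flags of linear subspaces, and whose interior points are the monomial valuations supported on a flag. Using the tautological scaling action of $\mathbb{R}_{>0}$ on $\Omega(\V)^{\rm an}$ together with the local topology near the outer part, one recovers this cone complex; the $\xi_\LL$ are then identified as the generators of its extremal rays and flags as its faces. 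Since everything in sight is formulated purely in terms of $\Omega(\V)^{\rm an}$, it is preserved by $\varphi^{\rm an}$. Hence $\varphi^*$ stabilises $\Gamma(\V)$ compatibly with its simplicial structure, and Proposition~\ref{prop-valuations}(i) produces the desired extension of $\varphi$ to ${\rm X}$.

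The main obstacle, I expect, is precisely this last step: reconstructing the combinatorics of the \emph{particular} blow-up ${\rm X}$ from the bare analytic space $\Omega(\V)^{\rm an}$. The outer part contains many divisorial points beyond $\Gamma(\V)$ --- for instance the sums ${\rm ord}_\LL + {\rm ord}_{\LL'}$ with $\LL \subsetneq \LL'$, which arise from blowing up ${\rm X}$ further --- so one must argue that the $\xi_\LL$ are pinned down by an intrinsic extremality property and that the incidence between them (flags) is faithfully recorded in the local structure and scaling action of $\Omega(\V)^{\rm an}$. Equivalently, one must show that this portion of the boundary combinatorics is independent of the chosen compactification and depends only on $\Omega(\V)$ itself; controlling which divisorial valuations of $\kappa(\V)$ have a center in the boundary of \emph{some} compactification, and isolating those coming from ${\rm X}$, is the technical heart of the argument.
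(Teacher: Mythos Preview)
Your overall strategy is the same as the paper's: reduce to Proposition~\ref{prop-valuations}(i), pass to Berkovich geometry over the trivially valued field, and find an \emph{intrinsic} characterisation of the cone complex attached to the boundary ${\rm D}$. But you have a genuine gap at exactly the point you flag yourself: you do not supply the intrinsic characterisation.

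The paper's key move is not the outer part $\Omega(\V)^{\rm an}\smallsetminus\Omega(\V)^\beth$, which as you note is far too large, but the set of \emph{maximal} points
\[
\Omega(\V)^{\rm an}_{\rm max} \;=\; \bigl\{\,x\in\Omega(\V)^{\rm an}\ :\ \text{$x$ is maximal for } x\preccurlyeq y \Leftrightarrow |f(x)|\leqslant|f(y)|\ \forall f\in\mathcal{O}(\Omega(\V))\,\bigr\}.
\]
This depends only on the affine algebra, so it is automatically preserved by $\varphi^{\rm an}$. The paper then proves $\Omega(\V)^{\rm an}_{\rm max}=\mathfrak{S}(\V)$. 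One inclusion comes from the retraction $\tau$ (every point is dominated by a point of the fan). The other inclusion, and the preservation of the conical structure, both rest on Lemma~\ref{lemma-fan}: the map $x\mapsto(f\mapsto|f(x)|)$ embeds $\mathfrak{S}(\V)$ into ${\rm Hom}(\mathcal{O}(\Omega(\V))^\times/k^\times,\mathbf{R}_{>0})$ so that \emph{distinct cones span distinct linear subspaces}. This lemma is where the specific geometry of the Drinfeld half-space enters --- one checks by hand, using ratios of linear forms, that there are enough invertible functions on $\Omega(\V)$ to separate the cones. As Remark~\ref{remark-complex}(1) makes explicit, this fails for other toroidal compactifications (already for $\mathbf{G}_{\rm m}^n\subset\mathbf{P}^n$ the map $\iota$ is bijective, the cones all span the same space, and inversion fails to preserve the conical structure).

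Your appeal to a ``tautological scaling action'' and ``local topology'' does not get around this: the deformation retract onto a dual complex is constructed from the chosen compactification ${\rm X}$, and without something like Lemma~\ref{lemma-fan} you have no way of knowing that a $k$-automorphism of $\Omega(\V)$ respects it rather than scrambling the cones. The missing ingredient is precisely a statement to the effect that invertible functions on $\Omega(\V)$ separate points of the fan and distinguish its maximal cones linearly; once you have that, maximality gives the intrinsic description and the rest of your outline goes through.
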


We can study this problem from a nice geometric viewpoint in the framework of Berkovich spaces.

Endowed with the trivial absolute value, $k$ becomes a complete non-Archimedean field. There is a well-defined category of $k$-analytic spaces, together with an analytification functor ${\rm Z} \leadsto {\rm Z}^{\rm an}$ from the category of $k$-schemes locally of finite type.
If ${\rm Z}$ is affine, then the topological space underlying ${\rm Z}^{\rm an}$ is the set of multiplicative $k$-seminorms on $\mathcal{O}({\rm Z})$ with the topology generated by
evaluation maps $x \mapsto |f(x)|:=x(f)$, where $f\in~\mathcal{O}({\rm
  Z})$. Imposing the additional condition that all seminorms are bounded by $1$ on
the algebra $\mathcal{O}({\rm Z})$, we obtain a compact domain $\mathrm{Z}^\beth$ in
${\rm Z}^{\rm an}$ equipped with a \emph{specialization} map ${\rm sp} :
{\rm Z}^\beth \rightarrow {\rm Z}$ (denoted by $r$ in
\cite{Thuillier}) which sends a multiplicative seminorm $x$ to the
prime ideal $\{f \in \mathcal{O}({\rm Z}) \ | \ |f(x)|<1\}$. The reader is refered to \cite[Section 3.5]{Berkovich_Book} and
\cite[Section 1]{Thuillier} for a detailed account.

Working in the analytic category over $k$ allows us to realize $\Gamma({\rm V})$ as a set of rays in $\Omega(\V)^{\rm an}$: for each ${\rm L} \in \mathcal{L}({\rm V})$, the map
$$\varepsilon_{\rm L} : (0,1] \rightarrow \Omega(\V)^{\rm an}, \ \ r \mapsto r^{{\rm ord}_{\rm L}( \cdot )}$$
is an embedding and $\varepsilon_{\rm L}(1)$ is the canonical point of $\Omega(\V)^{\rm an}$, namely the point corresponding to the trivial absolute value on $\kappa({\rm V})$.
Now, the proposition will follow from the fact that this collection of rays is the $1$-skeleton of a \emph{conical complex}~$\mathfrak{S}({\rm V})$ in $\Omega(\V)^{\rm an}$ which is preserved by every $k$-automorphism of $\Omega(\V)$.

\vskip2mm
This conical complex $\mathfrak{S}({\rm V})$ is the \emph{fan}
$\mathfrak{S}_0({\rm X},{\rm D})$ of the toroidal embedding
$\Omega(\V) \hookrightarrow {\rm X}$ introduced in \cite[Section 3.1
  and Proposition 4.7]{Thuillier}, following
\cite{Berkovich_Smooth}. Let us describe this construction in the particular case we consider here. 

\vskip1mm \begin{itemize}
\item[(a)] The canonical
map $$r : {\bf A}^{n,\beth}_k \rightarrow [0,1]^n, \ \ \ x \mapsto
(|t_1(x)|,\ldots, |t_n(x)|)$$
has a continuous section $j$ defined by mapping a tuple $r \in
[0,1]^n$ to the following diagonalizable multiplicative seminorm on
$k[t_1,\ldots, t_n]$ :
$$\sum_{\nu \in {\bf N}^n} a_{\nu}
  t^\nu \mapsto \max_{\nu} |a_\nu| r_1^{\nu_1} \cdots r_n^{\nu_n}.$$ 
\item[(b)] Let ${\rm D}(t_1,\ldots,t_n)$ denote the invertibility locus of $t_1,\ldots,t_n$. Intersecting the image of $j$ with the open domain ${\rm
    D}(t_1,\ldots,t_n)^\beth$, we obtain a closed subset
${\rm C}_n \subset {\rm D}(t_1,\ldots,t_n)^\beth$ homeomorphic to
  the cone $(0,1]^n$. The map $\tau = r \circ j$ is a retraction of
    ${\rm D}(t_1,\ldots,t_n)^\beth$ onto ${\rm C}_n$. Its fiber
    over a point $x \in {\rm C}_n$ is a $k$-affinoid domain whose
    Shilov boundary is reduced to $\{x\}$.
\item[(c)] We identify ${\rm C}_n$ and $(0,1]^n$ via $r$. For ${\rm I} \subset \{1,\ldots, n\}$, let ${\rm C}_n^{\rm
  I}$ denote the face of ${\rm C}_n$ defined by $r_i=0$ for every
$i \in {\rm I}$. The specialization map ${\rm sp} : {\rm
  D}(t_1,\ldots,t_n)^\beth \rightarrow {\bf A}^n_k$ sends the interior
of ${\rm C}_n^{\rm I}$ to the generic point of the locally closed
subscheme ${\rm Z}_{\rm I} = {\rm V}(t_i, \ i \in {\rm I}) \cap {\rm D}(t_j, \ j \notin
{\rm I})$. This implies that ${\rm C}_n^{\rm I}$ is contained in ${\rm
  U}^\beth = {\rm sp}^{-1}({\rm U})$ for any open neighborhood ${\rm U}$
of the generic point of ${\rm Z}_{\rm I}$.
\item[(d)] We can also recover the monoid $r_1^{\bf N} \cdots r_n^{\bf
  N}$ defining the integral affine structure on $(0,1]^n$ from the analytic structure of ${\bf
      A}^n_k$. Indeed, this is precisely the monoid of functions
    $|f| : {\rm C}_n \rightarrow (0,1]$ induced by germs $f \in
        \mathcal{O}_{{\bf A}^n_k,0}$ invertible on ${\rm D}(t_1,
        \ldots,t_n)$. Similarly, the submonoid corresponding to the
        face ${\rm C}_n^{\rm I}$ comes from germs of
        $\mathcal{O}_{{\bf A}^n_k}$ at the generic
        points of ${\rm Z}_{\rm I}$ which are invertible over ${\rm
          D}(t_1,\ldots, t_n)$.
\item[(e)] We now return to the scheme ${\rm X}$ with its simple
  normal crossing divisor ${\rm D}$. Fix a stratum
  ${\rm Z}$ with generic point $\eta_{\rm Z}$ and let $\Lambda_{\rm
    Z}^+$ denote the mono\"{\i}d of germs in $\mathcal{O}_{{\rm X},\eta_{\rm
      Z}}$ whose restriction to $\Omega({\rm V})$ is invertible. As in the
  proof of Lemma \ref{lemma-blow-up}, there is an open immersion
  $(t_1,\ldots, t_n) : {\rm U}_{\rm Z} \rightarrow {\bf A}^n_k$ identifying ${\rm Z}$ with a
  non-empty open subset of ${\rm Z}_{\rm I}$ for a suitable subset
  ${\rm I}$ of $\{1,\ldots, n\}$. By transport of structure, we obtain
  a closed subset ${\rm C}_{\rm Z}$ of ${\rm U}_{\rm Z}^\beth -
  {\rm Z}^\beth$ such that the natural map $${\rm C}_{\rm Z}
  \rightarrow {\rm Hom}_{\bf Mon}(\Lambda_{\rm Z}^+/k^\times,(0,1]),
    \ \ \ x \mapsto (f \mapsto |f(x)|)$$ is a homeomorphism. Covering
    ${\rm X}$ by the open subschemes ${\rm U}_{\rm Z}$, we can glue
    the cones ${\rm C}_{\rm Z}$ along common faces in $\Omega({\rm
      V})^{\rm an}$ to define a cone complex $\mathfrak{S}({\rm
      V})$. This gluing is compatible with local retractions, so we get
    a retraction of $\Omega({\rm V})^{\rm an}$ onto $\mathfrak{S}({\rm V})$.
\end{itemize}

\vskip2mm The following propery of the conical complex
$\mathfrak{S}({\rm V})$ is specific to our situation and is the key
point to prove Proposition \ref{prop-analytic}. It may be interesting
to look for other ``natural'' toroidal compactifications satisfying
this condition.

\begin{lemma}
\label{lemma-fan} The map
$$\iota : \mathfrak{S}({\rm V}) \rightarrow {\rm Hom}_{\mathbf{Ab}}\Bigl(\mathcal{O}\bigl(\Omega(\V)\bigr)^{\times}/k^\times, \mathbf{R}_{>0}\Bigr), \ x \mapsto (f \mapsto |f(x)|)$$
is a closed embedding inducing the integral affine structure on each cone.
Moreover, (the images of) distinct cones span distinct linear spaces.
\end{lemma}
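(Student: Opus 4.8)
The map $\iota$ is obtained by composing the local homeomorphisms
${\rm C}_{\rm Z} \xrightarrow{\sim} {\rm Hom}_{\bf Mon}(\Lambda_{\rm Z}^+/k^\times,(0,1])$
of construction (e) with the restriction maps sending a function
invertible on a neighborhood of $\eta_{\rm Z}$ to its class in
$\mathcal{O}(\Omega(\V))^\times/k^\times$; so on each cone ${\rm C}_{\rm Z}$ it is continuous and, by construction (d), it respects the integral affine structure. The first thing I would do is reduce the injectivity and properness of $\iota$ to a statement about the monoids $\Lambda_{\rm Z}^+$. Concretely, I would show that for every stratum ${\rm Z}$ the natural map $\mathcal{O}(\Omega(\V))^\times/k^\times \to \Lambda_{\rm Z}^+/k^\times$ is surjective: every germ at $\eta_{\rm Z}$ which is invertible on $\Omega(\V)$ is, up to a scalar, the restriction of a global unit on $\Omega(\V)$. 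This is where the special geometry of $\Omega(\V)$ enters — its ring of global functions is generated by the inverses of the linear forms cutting out the rational hyperplanes, and any germ invertible on $\Omega(\V)$ near a boundary stratum is a product of powers of such linear forms (this is exactly the content of the explicit formula for $f$ in the proof of Lemma \ref{lemma-blow-up}). Granting this surjectivity, $\iota$ restricted to ${\rm C}_{\rm Z}$ factors as ${\rm C}_{\rm Z} \to {\rm Hom}(\Lambda_{\rm Z}^+/k^\times, (0,1])$ followed by an \emph{injection} ${\rm Hom}(\Lambda_{\rm Z}^+/k^\times,(0,1]) \hookrightarrow {\rm Hom}_{\bf Ab}(\mathcal{O}(\Omega(\V))^\times/k^\times, \mathbf{R}_{>0})$, so $\iota$ is injective on each cone.

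Next I would handle global injectivity, i.e. that two points in different cones cannot have the same image. Two cones ${\rm C}_{\rm Z}$, ${\rm C}_{{\rm Z}'}$ correspond to flags $\mathcal{F}$, $\mathcal{F}'$. If neither flag contains the other, pick a linear subspace ${\rm L} \in \mathcal{F} \setminus \mathcal{F}'$; then the boundary function ${\rm ord}_{\rm L}$, realized through a suitable global unit (a product of linear forms), is strictly positive on the relevant open part of ${\rm C}_{\rm Z}$ but vanishes identically on ${\rm C}_{{\rm Z}'}$ — more precisely one exploits that ${\rm sp}$ sends the interior of a face to the generic point of the corresponding stratum, so the "coordinate functions" separate interiors of faces belonging to incompatible flags. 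The faces of a single cone are separated by the integral affine structure already recorded in (d). This reduces global injectivity to a combinatorial check on which $\Lambda_{\rm Z}^+$-coordinates are positive, exactly as in the incidence description of strata from Lemma \ref{lemma-blow-up}. Properness (closedness of the image) then follows because $\mathfrak{S}({\rm V})$ is a finite union of the closed cells ${\rm C}_{\rm Z}$, each of which is sent homeomorphically onto a closed polyhedral cone in the target $\mathbf{R}_{>0}^{\,(\ast)}$ by the above, and a finite union of closed sets is closed; one also checks the topology matches, i.e. $\iota$ is a homeomorphism onto its image, using that each ${\rm C}_{\rm Z}$ is compactified consistently.

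For the last assertion — distinct cones span distinct linear subspaces of the real vector space ${\rm Hom}_{\bf Ab}(\mathcal{O}(\Omega(\V))^\times/k^\times, \mathbf{R})$ — I would argue that the linear span of $\iota({\rm C}_{\rm Z})$ is cut out, among the coordinate hyperplanes indexed by the ${\rm ord}_{\rm L}$, exactly by the equations ${\rm ord}_{\rm L} = 0$ for ${\rm L} \notin \mathcal{F}$, so that the span determines the flag $\mathcal{F}$ and hence the cone. Equivalently, the support of a generic point of ${\rm C}_{\rm Z}$, viewed as a functional, is precisely $\{{\rm ord}_{\rm L} : {\rm L} \in \mathcal{F}\}$; since distinct flags give distinct such sets, distinct cones give distinct spans. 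The main obstacle is the surjectivity claim in the first step: one must genuinely prove that global units on $\Omega(\V)$ (products of ratios of linear forms, up to scalars) exhaust, up to $k^\times$, the local units $\Lambda_{\rm Z}^+$ at every boundary stratum. Once that is in hand — and it is essentially the explicit monomialization computation appearing in Lemma \ref{lemma-blow-up} — everything else is bookkeeping with finitely many polyhedral cones and their face relations.
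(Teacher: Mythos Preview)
Your treatment of the embedding on each individual cone matches the paper's: choosing a basis $(e_0,\ldots,e_n)$ adapted to the flag, the ratios $e_i/e_{i-1}$ are global units on $\Omega(\V)$ furnishing a regular system of parameters at $\eta_{\rm Z}$, so $\iota|_{{\rm C}_{\rm Z}}$ is an integral affine embedding. (A small quibble: your ``map $\mathcal{O}(\Omega(\V))^\times/k^\times \to \Lambda_{\rm Z}^+/k^\times$'' is not well-defined as written, since a global unit need not be regular at $\eta_{\rm Z}$; what you actually need, and what the paper uses, is that \emph{generators} of $\Lambda_{\rm Z}^+/k^\times$ lift to global units.)

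The genuine gap is in separating \emph{distinct} cones and in the final claim about distinct linear spans, which you misidentify as mere ``bookkeeping''. You write that the span of $\iota({\rm C}_{\rm Z})$ is ``cut out by the equations ${\rm ord}_{\rm L}=0$ for ${\rm L}\notin\mathcal{F}$'' and that the ``support'' of a generic point is $\{{\rm ord}_{\rm L}:{\rm L}\in\mathcal{F}\}$. But the ${\rm ord}_{\rm L}$ are not coordinates on the target ${\rm Hom}\bigl(\mathcal{O}(\Omega(\V))^\times/k^\times,\mathbf{R}\bigr)$; they are \emph{points} of it, and they satisfy many linear relations (already for $n=1$ one has $\sum_{\rm L}{\rm ord}_{\rm L}=0$, since every unit has degree-zero divisor). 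So the span does not automatically recover $\mathcal{F}$, and no global unit ``realizes ${\rm ord}_{\rm L}$'' in isolation: any ratio of linear forms has nonzero order along at least two distinct linear subspaces. The existence of a separating unit tailored to the pair $(\mathcal{F},\mathcal{F}')$ is exactly the content to be proved, not something one may invoke.

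The paper fills this gap by an explicit construction. Given ${\rm L}\in\mathcal{F}\setminus\mathcal{F}'$ of dimension $i$, embed $\mathcal{F}'$ in a complete flag with ${\rm L}_i\neq{\rm L}$, pick $u\in{\rm W}\setminus{\rm W}_i$ and $u_i\in{\rm W}_i\setminus{\rm W}$ (where ${\rm L}=\mathbf{P}({\rm V}/{\rm W})$, ${\rm L}_i=\mathbf{P}({\rm V}/{\rm W}_i)$), and set $f=u/(u+u_i)$. One then checks that the hyperplanes ${\rm Z}(u)$ and ${\rm Z}(u+u_i)$ lie in the same position relative to every ${\rm L}_j$, so $|f|\equiv 1$ on ${\rm C}_{{\rm Z}'}$, while $|f|<1$ on the interior of ${\rm C}_{\rm Z}$ (this last point also needs checking: one must verify ${\rm ord}_{\rm M}(f)\geqslant 0$ for every ${\rm M}\in\mathcal{F}$). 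This single construction yields both global injectivity of $\iota$ and the distinct-spans assertion at once; it, rather than the per-cone surjectivity, is the main obstacle.
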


\begin{proof} Roughly speaking, this statement means that there are enough invertible functions on $\Omega(\V)$.
Consider a stratum ${\rm Z}$ of ${\rm D}$ corresponding to a flag $\mathcal{F}$ of non-trivial linear subspaces of $\mathbf{P}({\rm V})$ and pick a basis $(e_0, \ldots, e_n)$ of ${\rm V}$ such that $\mathcal{F}$ is a subflag of
$${\rm Z}(e_1,\ldots, e_n) \subset {\rm Z}(e_2,\ldots, e_n) \subset \ldots \subset {\rm Z}(e_n).$$
The explicit description of ${\rm X}$ given at the end of the proof of Lemma \ref{lemma-blow-up} shows that the tuple $(e_1/e_0,e_2/e_1,\ldots,e_n/e_{n-1})$ of elements in $\mathcal{O}_{\rm X,\eta_{\rm Z}}$ contains a regular system of parameters defining ${\rm D}$ at $\eta_{\rm Z}$. Therefore, the map $\iota$ induces an integral affine embedding of the cone ${\rm C}_{\rm Z}$.

\vskip2mm Furthermore, we claim that the following fact is true:
\emph{given two distinct cones ${\rm C}, {\rm C}'$, there exists $f \in \mathcal{O}\bigl(\Omega(\V)\bigr)^\times$ such that $|f| = 1$ on one of them and $|f|<1$ on the interior of the other}.
Injectivity of the map $\iota$ and the last statement of the Lemma follow immediately. 

\vskip2mm  We finish the proof by establishing the claim. Given two non-zero vectors $v,v' \in {\rm V}$ and a non-trivial linear subspace ${\rm L} \subset
\mathbf{P}({\rm V})$, the function $v/v'$ is either a unit, a
uniformizer or the inverse of a uniformizer at the generic point of
${\rm E}_{\rm L}$, according to the position of ${\rm Z}(v)$ and ${\rm
  Z}(v')$ with respect to ${\rm L}$. It follows that 
\begin{itemize}
\item[(a)] $|v/v'|<1$ on $\varepsilon_{\rm L}(0,1)$, if ${\rm L}
  \subset {\rm Z}(v)$ and ${\rm L} \not\subset {\rm Z}(v')$
\item[(b)] $|v/v'|>1$ on $\varepsilon_{\rm L}(0,1)$, if ${\rm L}
  \subset {\rm Z}(v')$ and ${\rm L} \not\subset {\rm Z}(v)$
\item[(c)] $|v/v'|=1$ on $\varepsilon_{\rm L}(0,1]$, if the
  hyperplanes ${\rm Z}(v)$ and ${\rm Z}(v')$ are in the same position with respect to~${\rm L}$.
\end{itemize}
Consider two distinct strata ${\rm Z}$, ${\rm Z}'$ of ${\rm D}$, corresponding to distinct flags $\mathcal{F}, \mathcal{F}'$ of non-trivial linear subspaces.
Pick a linear space ${\rm L}$ occurring in only one of them, say $\mathcal{F}$, and set $i = {\rm dim}~{\rm L}$. We embed $\mathcal{F}'$ into a complete flag $({\rm L}_0 \subset {\rm L}_1 \subset \ldots \subset {\rm L}_{n-1})$ such that ${\rm L}_i \neq {\rm L}$.

We claim that this assumption guarantees the existence of two hyperplanes ${\rm H}, {\rm H}'$ such that 

\begin{itemize}
\item[-] ${\rm L} \subset {\rm H}$ and ${\rm L}_i \not\subset {\rm H}$
\item[-] ${\rm L}_i \cap {\rm H} = {\rm L}_i \cap {\rm H}'$ and ${\rm L} \not\subset {\rm H}'$.
\end{itemize}

In order to prove this claim, we argue with the corresponding linear
quotient spaces of $\V$. Let ${\rm L} = \mathbf{P}(\V/\mathrm{W})$ and
${\rm L}_i = \mathbf{P}(\V/\mathrm{W}_i)$ where $ \mathrm{W}$ and $ \mathrm{W}_i$ are different linear subspaces of $\V$ of dimension $n-i$. Choose a vector $u \in  \mathrm{W}$ which is not contained in $ \mathrm{W}_i$, and a vector $u_i \in  \mathrm{W}_i$ which is not contained in $ \mathrm{W}$. We denote by $\mathrm{U}$ the line in $\V$ generated by $u$ and by $\mathrm{U}'$ the line generated by $u' = u + u_i$. The corresponding hyperplanes ${\mathrm H} = \mathbf{P}(\V/\mathrm{U})$ and $\mathrm{H}' = \mathbf{P}(\V/\mathrm{U}')$ have the desired properties. 

\vskip2mm
In particular, ${\rm H}$ and ${\rm H}'$ are in the same position with respect to ${\rm L}_0, \ldots, {\rm L}_{n-1}$.
Given any equations $v,v' \in {\rm V}$ of ${\rm H}$ and ${\rm H}'$ respectively, we thus obtain $|v/v'|=1$ on ${\rm C}_{\rm Z'}$.
Let us now consider the flag $\mathcal{F}$. Any linear subspace
${\rm M} \in \mathcal{F}$ contained in ${\rm H}'$ is
necessarily contained in ${\rm L}$, hence in ${\rm H}$, therefore $|v/v'| \leqslant 1$ on the
ray $\varepsilon_{{\rm M}}(0,1]$. Since $|v/v'|<1$ on the interior of
 the ray $\varepsilon_{\rm L}(0,1]$, we deduce that $|v/v'|<1$ on the
  interior of the cone ${\rm C}_{\rm Z}$. \end{proof}

\medskip 

\begin{proof}[Proof of proposition \ref{prop-analytic}] First, we observe that $\mathfrak{S}({\rm V})$ coincides with the set $\Omega(\V)^{\rm an}_{\rm max}$ of maximal points of $\Omega(\V)^{\rm an}$ for the following ordering:
$$x \preccurlyeq y \ \ \ \Longleftrightarrow \ \ \ \forall f \in \mathcal{O}(\Omega(\V)^{\rm an}), \ \ |f(x)| \leqslant |f(y)|.$$
For any point $x \in \Omega({\rm V})^{\rm an}$, we have $x
\preccurlyeq \tau(x)$ because the fiber $\tau^{-1}(\tau(x))$ is a
$k$-affinoid domain with Shilov boundary $\{\tau(x)\}$. This implies
the inclusion $\Omega(\V)^{\rm an}_{\rm max} \subset \mathfrak{S}({\rm
  V})$.

We apply Lemma \ref{lemma-fan} to get the converse inclusion. If a point $x
\in \mathfrak{S}({\rm V})$ is dominated by a point $x' \in \Omega({\rm
  V})^{\rm an}$, then it is also dominated by $\tau(x')$. However,
for any two distinct points $x,y$ in $\mathfrak{S}({\rm V})$, there exists $f \in \mathcal{O} (\Omega(\V)^{\rm an})^\times$ such that $|f(x)| \neq |f(y)|$, hence such that $|f(x)| < |f(y)|$ and $|({1 \over f})(x)|>|({1 \over f})(y)|$ 
or vice versa, and therefore $x$ and $y$ are incomparable. In
particular, we get $x = \tau(x')$ and thus $x$ is maximal.

\vskip2mm
The above characterization of $\mathfrak{S}({\rm V})$ as a closed subset of $\Omega(\V)^{\rm an}$ implies that it is preserved by any $k$-automorphism $\varphi$ of $\Omega(\V)$.
It remains to check that the homeomorphism of $\mathfrak{S}({\rm V})$ induced by $\varphi$ also preserves the conical structure.
Let $\Phi$ denote the linear automorphism of ${\rm Hom}_{\mathbf{Ab}}(\mathcal{O}(\Omega(\V)^{\rm an})^\times, \mathbf{R}_{>0})$ deduced from $\varphi$. Given an 
$n$-dimensional cone ${\rm C} \subset \mathfrak{S}({\rm V})$, the
image of its interior is disjoint from the $(n-1)$-skeleton of
$\mathfrak{S}({\rm V})$; otherwise it would meet the interiors of two
distinct $n$-dimensional cones ${\rm C}'$, ${\rm C}''$, hence $\langle
\iota~{\rm C}'\rangle = \Phi(\langle\iota~{\rm C}\rangle) = \langle
  \iota~{\rm C}''\rangle$ contradicting Lemma \ref{lemma-fan}. It follows that if $\varphi({\rm C})$ is contained in some $n$-dimensional cone ${\rm
  C}'$, and thus $\varphi({\rm C}) = {\rm C}'$ by considering $\varphi^{-1}$.
The assertion for lower dimension cones follows at once by considering
faces since the automorphism $\Phi$ is linear.

In particular, we see that $\varphi$ preserves the $1$-skeleton of
$\mathfrak{S}({\rm V})$, hence the set $\Gamma({\rm V})$ of discrete
valuations on $\kappa({\rm V})$ associated with irreducible components of ${\rm D} = {\rm X} -
\Omega(\V)$, together with the simplicial structure reflecting 
the incidence relations between these components. By Proposition
\ref{prop-valuations} (i), this implies that $\varphi$ extends to a
$k$-automorphism of ${\rm X}$. \end{proof}

\begin{remark}\label{remark-complex}
\emph{\begin{enumerate}
\item Let ${\rm D}$ be a simple normal crossing divisor on a smooth and proper (connected) scheme ${\rm X}$ over $k$.
Even if $\Omega(\V) = {\rm X}-{\rm D}$ is affine, Lemma \ref{lemma-fan} and its consequences may fail.
For example, consider the case ${\rm X} = \mathbf{P}^n_k$. If ${\rm D}$ is a hyperplane, then $\mathfrak{S}_0({\rm X},{\rm D})$ is a $1$-dimensional cone whereas $\Omega(\V)^{\rm an}_{\rm max}$ is empty.
If ${\rm D}$ is the union of the coordinate hyperplanes, then $\Omega(\V) = \mathbf{G}_{\rm m}^n$ and $\mathfrak{S}_0({\rm X},{\rm D}) = \Omega(\V)^{\rm an}_{\rm max}$ is the toric fan, but the map $\iota$ is bijective, hence all maximal cones span the same linear space.
In fact, the inversion $(t_1,\ldots, t_n) \mapsto (t_1^{-1},\ldots, t_n^{-1})$ on $\mathbf{G}_{\rm m}^n$ transforms the fan $\mathfrak{S}_0({\rm X},{\rm D})$ into its opposite, hence does not preserve the conical structure.
This reflects the fact that this automorphism of $\mathbf{G}_{\rm m}^n$ does not extend to $\mathbf{P}^n$.
\item The conical complex $\mathfrak{S}({\rm V})$ is also the \emph{vectorial building} of ${\rm PGL}({\rm V})$, but this is somehow fortuitous and irrelevent from the viewpoint of automorphisms.
In general, there exists for any connected and split semi-simple
$k$-group ${\rm G}$ a canonical embedding of the vectorial building
$\mathcal{V}({\rm G},k)$ of ${\rm G}(k)$ into the analytification of
an open affine subscheme $\Omega$ in any flag variety ${\rm Y}$ of ${\rm G}$
\cite[Section 5.5]{Berkovich_Book}. However, this observation does not lead
to a generalization of Theorem \ref{main-theorem}, at least along the
lines of the present proof. Indeed, while we made crucial use of the
fact that $\mathfrak{S}({\rm V})$ is the fan of a normal crossing
divisor, we doubt that $\mathcal{V}({\rm G},k)$ can be realized as
the fan of a toroidal compactification of $\Omega(\V)$ if $({\rm G}', {\rm
  Y}) \neq \bigl({\rm PGL}({\rm V}), \mathbf{P}({\rm V})\bigr),
\bigl({\rm PGL}({\rm V}), \mathbf{P}({\rm V}^\vee)\bigr)$. 
\item It may be interesting to try to extend our method, based on the
  study of toroidal compactifications, to determine the automorphism
  groups of other period domains. 
\item Whether the above proposition can be proved without analytic geometry is not clear.
\end{enumerate}}
\end{remark}

\section{Step 2 -- Geometry of the blow-up}
The second step in the proof of the theorem relies on elementary intersection theory on ${\rm X}$, which we review in this section. The standard reference is \cite{Fulton}.

\vskip3mm The Chow ring ${\rm CH}^*$ is a contravariant functor from the category of smooth $k$-schemes to the category of graded commutative rings.
For any smooth $k$-scheme ${\rm X}$, the abelian group underlying ${\rm CH}^*({\rm X})$ is the free abelian group on integral subschemes of ${\rm X}$ modulo rational equivalence, and it is graded by codimension.
Multiplication comes from the intersection product.
We write $[{\rm Z}]$ for the class of a closed subscheme ${\rm Z}$ of ${\rm X}$. 

\vskip2mm We are going to use the following two basic facts.

\begin{itemize}
\item[(a)] Let ${\rm Y}$ be a regularly embedded closed subscheme of
  ${\rm X}$ of codimension $\geqslant 2$ and let $\pi : \widetilde{\rm X} \rightarrow {\rm X}$ be the blow-up of ${\rm X}$ along ${\rm Y}$, with exceptional divisor $\widetilde{\rm Y}$.
The canonical map
$${\rm CH}^1({\rm X}) \oplus \mathbf{Z}[\widetilde{\rm Y}] \rightarrow {\rm CH}^1(\widetilde{\rm X}), \ \ (z, n[\widetilde{\rm Y}]) \mapsto \pi^*(z) + n[\widetilde{\rm Y}]$$
is an isomorphism \cite[Proposition 6.7]{Fulton}. 
\item[(b)] In the situation of (a), let ${\rm V}$ be an integral subscheme of ${\rm X}$ with strict transform $\widetilde{\rm V}$.
If ${\rm codim}({\rm Y},{\rm X}) \leqslant {\rm codim}({\rm V}\cap {\rm Y} ,{\rm V}) $, then
$$\pi^*[{\rm V}] = [\widetilde{\rm V}]$$ in ${\rm CH}^*(\widetilde{\rm X})$ \cite[Corollary 6.7.2]{Fulton}. 
\end{itemize}

Now we focus on the particular case where $\pi : {\rm X} \rightarrow \mathbf{P}({\rm V})$ is the blow-up along the full hyperplane arrangement, with exceptional divisor ${\rm D}$.
 
\begin{lemma}
\label{lemma-chow}
We have
$$\mathrm{CH}^1({\rm X}) = \mathbf{Z}h \oplus \bigoplus_{\rm L} \mathbf{Z}[{\rm E}_{\rm L}],$$
where $h = \pi^{*}[{\rm H}]$ denotes the pull-back of the hyperplane class $[{\rm H}]$ on $\mathbf{P}({\rm V})$ and ${\rm L}$ runs over the set of non-trivial linear subspaces of \ $\mathbf{P}({\rm V})$ of codimension at least $2$.
\end{lemma}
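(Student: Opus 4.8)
The plan is to compute $\mathrm{CH}^1(\mathrm{X})$ by induction along the tower of blow-ups
$$\mathrm{X} = \mathrm{X}_{n-1} \xrightarrow{\pi_{n-1}} \mathrm{X}_{n-2} \to \cdots \to \mathrm{X}_1 \xrightarrow{\pi_1} \mathrm{X}_0 \xrightarrow{\pi_0} \mathrm{X}_{-1} = \mathbf{P}(\mathrm{V}),$$
using fact (a) repeatedly. First I would record the base case: $\mathrm{CH}^1(\mathbf{P}(\mathrm{V})) = \mathbf{Z}[\mathrm{H}]$. Then, assuming inductively that $\mathrm{CH}^1(\mathrm{X}_{i-1})$ is freely generated by the pull-back of $[\mathrm{H}]$ together with the classes of the exceptional hypersurfaces produced so far (indexed by linear subspaces of dimension $< i$), I would apply $\pi_i$, which blows up $\mathrm{X}_{i-1}$ along the disjoint union of the strict transforms of the $i$-dimensional linear subspaces $\mathrm{L}$. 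Each such center has codimension $n-i \geqslant 2$ in $\mathrm{X}_{i-1}$ (here $i$ runs over $0, \ldots, n-2$, so that $n-i \geqslant 2$; the top map $\pi_{n-1}$ is an isomorphism and contributes nothing), so fact (a) applies to each connected component of the center and gives
$$\mathrm{CH}^1(\mathrm{X}_i) \cong \pi_i^* \mathrm{CH}^1(\mathrm{X}_{i-1}) \oplus \bigoplus_{\dim \mathrm{L} = i} \mathbf{Z}[\mathrm{E}_{\mathrm{L}}^{(i)}].$$
Composing these isomorphisms along the whole tower yields the claimed decomposition, with $h = \pi^*[\mathrm{H}]$ and the $[\mathrm{E}_{\mathrm{L}}]$ (the strict transforms in $\mathrm{X}$ of the $\mathrm{E}_{\mathrm{L}}^{(i)}$) as the free generators, $\mathrm{L}$ running over all non-trivial linear subspaces of codimension $\geqslant 2$ — equivalently of dimension $\leqslant n-2$.

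The one point that needs a little care is bookkeeping the identification between the exceptional classes of the intermediate blow-ups and the divisor classes $[\mathrm{E}_{\mathrm{L}}]$ appearing in the statement. Concretely, one must check that under the pull-back maps $\pi_{i+1}^*, \ldots, \pi_{n-1}^*$ the class $[\mathrm{E}_{\mathrm{L}}^{(i)}]$ of the exceptional divisor created at stage $i$ maps to the class $[\mathrm{E}_{\mathrm{L}}]$ of its strict transform in $\mathrm{X}$, rather than to $[\mathrm{E}_{\mathrm{L}}]$ plus a correction term supported on later exceptional divisors. This is exactly what fact (b) is for: at each subsequent stage $j > i$ the center of $\pi_j$ is the strict transform of a $j$-dimensional linear subspace $\mathrm{L}'$, and $\mathrm{E}_{\mathrm{L}}^{(i)}$ (a hypersurface in $\mathrm{X}_{j-1}$) meets this center in a proper subvariety — in fact in a hypersurface of $\mathrm{E}_{\mathrm{L}}$ — so the codimension hypothesis ${\rm codim}(\text{center}) = n-j \leqslant {\rm codim}(\mathrm{E}_\mathrm{L} \cap \text{center},\, \mathrm{E}_\mathrm{L})$ of fact (b) is satisfied and $\pi_j^*[\mathrm{E}_\mathrm{L}^{(j-1\text{-level})}] = [\text{strict transform}]$. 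Thus no correction terms appear, the direct sum decompositions at each level glue cleanly, and the generators are precisely $h$ and the $[\mathrm{E}_{\mathrm{L}}]$.

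I expect the main (mild) obstacle to be precisely this compatibility verification — making sure the intersection-of-center-with-$\mathrm{E}_{\mathrm{L}}$ codimension estimate holds at every stage, which ultimately rests on the combinatorial fact established before Lemma~\ref{lemma-blow-up} that $\mathrm{E}_{\mathrm{L}}$ and $\mathrm{E}_{\mathrm{L}'}$ meet if and only if $\mathrm{L} \subset \mathrm{L}'$ or $\mathrm{L}' \subset \mathrm{L}$, together with the fact that in that case the intersection is irreducible of codimension one in each. Everything else is a routine telescoping of the isomorphisms from fact (a). A final remark: the statement excludes hyperplanes ($i = n-1$) from the index set for the $[\mathrm{E}_{\mathrm{L}}]$, which is consistent because $\pi_{n-1}$ is an isomorphism, so hyperplanes of $\mathbf{P}(\mathrm{V})$ contribute no new exceptional divisor — the class of (the strict transform of) a hyperplane $\mathrm{H}'$ is simply $h$ minus a combination of the $[\mathrm{E}_\mathrm{L}]$ for $\mathrm{L} \subsetneq \mathrm{H}'$, a relation that will be used in the sequel.
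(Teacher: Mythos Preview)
Your approach is exactly that of the paper: apply fact (a) iteratively along the tower to produce the free generators, then use fact (b) to identify the pulled-back intermediate exceptional classes $(\pi_{i+1}\circ\cdots\circ\pi_{n-1})^*[\mathrm{E}_{\mathrm{L}}^{(i)}]$ with the final classes $[\mathrm{E}_{\mathrm{L}}]$.

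There is, however, a slip in your verification of the hypothesis of fact (b). You assert that the center of $\pi_j$ (the strict transform of a $j$-dimensional $\mathrm{L}'$) meets the strict transform of $\mathrm{E}_{\mathrm{L}}^{(i)}$ in a \emph{hypersurface} of the latter. This is false: when $\mathrm{L}\subset\mathrm{L}'$ the intersection has codimension $n-j$ in the strict transform of $\mathrm{E}_{\mathrm{L}}^{(i)}$, not codimension $1$, and when $\mathrm{L}\not\subset\mathrm{L}'$ the intersection is empty. Indeed, were the intersection a hypersurface, the very inequality you write down, $n-j\leqslant \mathrm{codim}(\mathrm{E}_{\mathrm{L}}\cap\text{center},\mathrm{E}_{\mathrm{L}})$, would read $n-j\leqslant 1$ and fail for every $j<n-1$. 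The correct statement---and this is how the paper phrases it---is that the center of $\pi_j$ is \emph{transversal} to the strict transform of $\mathrm{E}_{\mathrm{L}}^{(i)}$ in $\mathrm{X}_{j-1}$; transversality gives equality in the codimension hypothesis of (b). You seem to have conflated the final picture in $\mathrm{X}$, where $\mathrm{D}$ has simple normal crossings and $\mathrm{E}_{\mathrm{L}}\cap\mathrm{E}_{\mathrm{L}'}$ is indeed a hypersurface in each, with the intermediate picture in $\mathrm{X}_{j-1}$ before $\mathrm{L}'$ has been blown up. Once this is corrected, your argument goes through and coincides with the paper's.
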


\begin{proof} For any non-trivial linear subspace ${\rm L}$ of
$\mathbf{P}({\rm V})$ of dimension $i \in \{0,\ldots, n-1\}$, let
${\rm E}_{\rm L}^{(i)} \subset {\rm X}_i$ denote the blow-up of its strict transform in
${\rm X}_{i-1}$; this is a smooth irreducible hypersurface.
Recall that we have $\pi = \pi_0 \circ \pi_1 \circ \ldots \circ \pi_{n-1}$, where $\pi_{n-1}$ is an isomorphism. Applying (a) iteratively to each blow-up $\pi_0, \ldots, \pi_{n-2}$, we obtain that ${\rm CH}^1({\rm X})$ is the free abelian group on $h$ and the classes $(\pi_{i+1} \circ \ldots \circ \pi_{n-1})^*[{\rm E}_{\rm L}^{(i)}]$, where $i \in \{0, \ldots, n-2\}$ and ${\rm L}$ runs over the set of $i$-dimensional linear subspaces of $\mathbf{P}({\rm V})$. 

\vskip1mm The conclusion follows from the additional fact that we have an equality
$$(\pi_{i+1} \circ \ldots \circ \pi_{n-1})^*[{\rm E}_{\rm L}^{(i)}] = [{\rm E}_{\rm L}]$$
in ${\rm CH}^1({\rm X})$ for any linear subspace ${\rm L}$ of dimension $i \in \{0,\ldots, n-2\}$. This is an immediate consequence of (b), since the center of each blow-up $\pi_j$, with $j \in \{i+1, \ldots, n-1\}$, is transversal to the strict transform of ${\rm E}_{\rm L}^{(i)}$ in ${\rm X}_{j-1}$. 
\end{proof}

\medskip

For each integer $d \geqslant 1$, we define
$$\lambda(d) = \#~\left\{\begin{array}{c} {\rm non-trivial \ linear \ subspaces} \\ {\rm of \ codimension} \ \geqslant 2 \quad {\rm in} \ \mathbf{P}_k^d\end{array} \right\}.$$
Additionally, we set $\lambda(0)=0$.

\begin{lemma}
\label{lemma-rank}
Let ${\rm L} \subset \mathbf{P}({\rm V})$ be a non-trivial linear subspace of dimension $d$; note that $d \in \{0, \ldots, n-1\}$.
\begin{itemize}
\item[(i)] We have $${\rm rk}~{\rm CH}^1({\rm E}_{\rm L}) = \lambda(d)
  + \lambda(n-1-d) + \varepsilon(d),$$ where
      $\varepsilon(d) = 1$ if $d \in \{0,n-1\}$ and $\varepsilon(d) =
      2$ otherwise.
\item[(ii)] For every linear subspace ${\rm L}' \subset \mathbf{P}({\rm
  V})$ of dimension $d'$ satisfying $d < d' < n-1-d$, the following
  inequality holds $${\rm
  rk}~ {\rm CH}^1({\rm E}_{\rm L}) > {\rm rk}~ {\rm CH}^1({\rm E}_{\rm L'}).$$
\end{itemize}
\end{lemma}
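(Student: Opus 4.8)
The plan is to compute $\mathrm{rk}\,\mathrm{CH}^1(\E_{\LL})$ explicitly from the description of $\E_{\LL}$ in Section~\ref{section-auto}, using the elementary facts (a) and (b) together with Lemma~\ref{lemma-chow}, and then to derive the inequality in (ii) from the convexity of $\lambda$.

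For (i), write $d=\dim\LL$ and treat the extreme cases first. If $d=0$, then $\E_{\LL}$ is the exceptional $\mathbf{P}^{n-1}$ of the point $\LL$, blown up along the arrangement cut on it by the hyperplanes of $\pv$ through $\LL$; since these induce \emph{all} hyperplanes of that $\mathbf{P}^{n-1}$, the scheme $\E_{\LL}$ is the analogue of $\X$ for $\mathbf{P}^{n-1}$, and Lemma~\ref{lemma-chow} gives $\mathrm{rk}\,\mathrm{CH}^1(\E_{\LL})=1+\lambda(n-1)=\lambda(0)+\lambda(n-1)+\varepsilon(0)$. The case $d=n-1$ ($\LL$ a hyperplane) is identical, $\E_{\LL}$ then being the strict transform of $\LL$, i.e. the analogue of $\X$ for $\LL\cong\mathbf{P}^{n-1}$. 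For $1\leqslant d\leqslant n-2$, recall that $\E_{\LL}^{(d)}$ is a $\mathbf{P}^{n-1-d}$-bundle over the strict transform $\widetilde{\LL}$ of $\LL$ in $\X_{d-1}$, that $\widetilde{\LL}$ is the analogue of $\X$ for $\LL\cong\mathbf{P}^{d}$, and that $\E_{\LL}$ is obtained from $\E_{\LL}^{(d)}$ by blowing up the arrangement induced by the hyperplanes through $\LL$. Lemma~\ref{lemma-chow} and the projective bundle formula give $\mathrm{rk}\,\mathrm{CH}^1(\E_{\LL}^{(d)})=(1+\lambda(d))+1$. The centres of the iterated blow-up $\E_{\LL}\to\E_{\LL}^{(d)}$ are the strict transforms of the subbundles attached to the linear subspaces $\mathrm{M}$ with $\LL\subsetneq\mathrm{M}$, the subbundle of $\mathrm{M}$ having codimension $n-\dim\mathrm{M}$ in the ambient scheme, so by fact (a) each $\mathrm{M}$ with $\dim\mathrm{M}\leqslant n-2$ contributes exactly one new generator of $\mathrm{CH}^1$ (and those with $\dim\mathrm{M}=n-1$ contribute none). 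Writing $\LL=\mathbf{P}(\V/\mathrm{W})$ with $\dim\mathrm{W}=n-d$, these $\mathrm{M}$ correspond bijectively, via $\mathrm{M}=\mathbf{P}(\V/\mathrm{W}')\mapsto\mathbf{P}(\mathrm{W}/\mathrm{W}')$, to the non-trivial linear subspaces of codimension $\geqslant 2$ in $\mathbf{P}(\mathrm{W})\cong\mathbf{P}^{n-1-d}$, of which there are $\lambda(n-1-d)$. Adding these contributions, $\mathrm{rk}\,\mathrm{CH}^1(\E_{\LL})=2+\lambda(d)+\lambda(n-1-d)=\lambda(d)+\lambda(n-1-d)+\varepsilon(d)$.

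For (ii), put $q=\#k$ and $\delta(a)=\lambda(a)-\lambda(a-1)$ for $a\geqslant 1$. A direct count (e.g. with Gaussian binomial coefficients) shows that $\lambda$ is strictly convex, with the uniform bound $\delta(a+1)-\delta(a)=\lambda(a+1)-2\lambda(a)+\lambda(a-1)\geqslant\lambda(2)=q^2+q+1$ for every $a\geqslant 1$ (equality at $a=1$); in particular $\delta$ is strictly increasing. By (i),
$$\mathrm{rk}\,\mathrm{CH}^1(\E_{\LL})-\mathrm{rk}\,\mathrm{CH}^1(\E_{\LL'})=\bigl[\lambda(n-1-d)-\lambda(n-1-d')\bigr]-\bigl[\lambda(d')-\lambda(d)\bigr]+\bigl[\varepsilon(d)-\varepsilon(d')\bigr].$$
With $\ell=d'-d\geqslant 1$, each of the first two brackets is a telescoping sum of $\ell$ consecutive values of $\delta$, and together they equal $\sum_{j=1}^{\ell}\bigl(\delta(n-1-d'+j)-\delta(d+j)\bigr)$. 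The hypothesis $d'<n-1-d$ gives $n-1-d'>d$, so every summand is $\geqslant 0$ (monotonicity of $\delta$); it also gives $n-d'\geqslant d+2$, so the $j=1$ summand is at least $\delta(d+2)-\delta(d+1)\geqslant q^2+q+1$. Since $|\varepsilon(d)-\varepsilon(d')|\leqslant 1$, this yields $\mathrm{rk}\,\mathrm{CH}^1(\E_{\LL})-\mathrm{rk}\,\mathrm{CH}^1(\E_{\LL'})\geqslant(q^2+q+1)-1>0$.

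I expect the bookkeeping in (i) to require the most care: one must check that the successive centres of the blow-up $\E_{\LL}\to\E_{\LL}^{(d)}$ are smooth, hence regularly embedded, so that fact (a) applies at every stage — this should go by the same explicit chart analysis as in the proof of Lemma~\ref{lemma-blow-up} — and that these centres are enumerated exactly by the subspaces $\mathrm{M}$ with $\LL\subsetneq\mathrm{M}$ of codimension $\geqslant 2$ in $\pv$. Part (ii) is then purely combinatorial; its only delicate point is the defect $\varepsilon(d)-\varepsilon(d')=-1$, which arises only when $d=0$ and is absorbed by the lower bound on the second differences of $\lambda$.
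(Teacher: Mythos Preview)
Your treatment of (i) is the paper's argument: identify the strict transform of $\LL$ in $\X_{d-1}$ with the analogue of $\X$ for $\LL\cong\mathbf{P}^d$ (so its Picard rank is $1+\lambda(d)$ by Lemma~\ref{lemma-chow}), pass to $\E_{\LL}^{(d)}$ via the projective-bundle formula, and then add one generator for each blow-up centre of codimension $\geqslant 2$ in the tower $\E_{\LL}\to\E_{\LL}^{(d)}$, matching these with the $\lambda(n-1-d)$ relevant subspaces of $\mathbf{P}(\mathrm{W})$. The paper writes the same computation with the case split on $d\in\{0,n-1\}$ handled inside the formulae rather than separately, but the content is identical.

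For (ii) you and the paper take genuinely different combinatorial routes. The paper reduces the inequality to the single statement $\lambda(t)-\lambda(t-1)>\lambda(t-1)+1$ for $t\geqslant 2$ and proves it geometrically: partition codimension-$\geqslant 2$ subspaces of $\mathbf{P}^t$ according to their position relative to a fixed hyperplane ${\rm H}$, obtaining $\lambda(t)=\nu(t)+\lambda(t-1)+\#\mathbf{P}^{t-1}(k)$, and then use the injection $\LL\mapsto\langle\LL,p\rangle$ (for $p\notin{\rm H}$) to show $\nu(t)\geqslant\lambda(t-1)$. Your approach via the telescoping identity $\sum_{j=1}^{\ell}\bigl(\delta(n-1-d'+j)-\delta(d+j)\bigr)$ and strict convexity of $\lambda$ is structurally cleaner and explains why the inequality in (ii) holds with a large margin; the trade-off is that you assert the second-difference bound $\lambda(a+1)-2\lambda(a)+\lambda(a-1)\geqslant q^2+q+1$ without proof. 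This bound is correct (and the minimum is indeed attained at $a=1$), but note that your argument only needs the much weaker fact that these second differences are at least $2$, which follows immediately once you check that $\delta$ is non-decreasing --- for instance from the paper's identity $\delta(t)=\nu(t)+\#\mathbf{P}^{t-1}(k)$ together with the obvious injection showing $\nu(t+1)\geqslant\nu(t)$. With that adjustment your proof of (ii) is complete and arguably more conceptual; the paper's version simply has the virtue of being fully self-contained as written.
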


\begin{proof} (i) Let ${\rm L}_{d-1}$
(resp. $\widetilde{\rm L})$ denote the strict transform of ${\rm L}$
in ${\rm X}_{d-1}$ (resp. in ${\rm X}_d$). The scheme ${\rm E}_{\rm
  L}$ is the blow-up of $\widetilde{\rm L}$ along the hypersurface
arrangement induced by hyperplanes of $\mathbf{P}({\rm V})$ containing
${\rm L}$. Applying (a), we obtain \begin{eqnarray*}{\rm rk}~{\rm
    CH}^1({\rm E}_{\rm L}) & = & {\rm rk}~{\rm CH}^1(\widetilde{\rm
    L}) + \# \left\{\begin{array}{c} {\rm linear \ spaces \ of
    \ codim} \ \geqslant 2 \\  {\rm strictly \ containing} \ \ {\rm
    L} \end{array}\right\} \\ & = &  {\rm rk}~{\rm
    CH}^1(\widetilde{\rm L}) + \lambda(n-d-1).\end{eqnarray*} Since
$\widetilde{\rm L} = \mathbf{P}(\mathcal{N})$, where $\mathcal{N}$ is
the conormal sheaf to ${\rm L}_{d-1}$ in ${\rm X}_{d-1}$, of rank
$n-d$, it follows from \cite[Theorem 3.3, (b)]{Fulton} that $${\rm rk}~{\rm CH}^1(\widetilde{\rm L}) = {\rm rk}~{\rm CH}^0({\rm L}_{d-1}) + {\rm rk}~{\rm CH}^1({\rm L}_{d-1}) = 1 + {\rm rk}~{\rm CH}^1({\rm L}_{d-1})$$ if $0 \leqslant d < n-1$, and $${\rm rk}~{\rm CH}^1(\widetilde{\rm L}) = {\rm rk}~{\rm CH}^1({\rm L}_{d-1})$$ if $d=n-1$.

\vskip1mm Finally, since ${\rm L}_{d-1}$ is the blow-up of ${\rm L}$ along the full hyperplane arrangement, $${\rm rk}~{\rm CH}^1({\rm L}_{d-1}) = {\rm rk}~{\rm CH}^1({\rm L}) + \# \left\{\begin{array}{c} {\rm non-trivial \ linear \ subspaces} \\  {\rm of \ codimension \ } \geqslant 2 \ {\rm in \ L} \end{array} \right\},$$ hence $${\rm rk}~{\rm CH}^1({\rm L}_{d-1}) = \left\{\begin{array}{ll} 1 + \lambda(d) & {\rm if} \ 0<d\leqslant n-1 \\ 0 & {\rm if} \ d=0. \end{array} \right.$$

\vskip2mm (ii) In view of (i), it is enough to prove the
inequality $$(1) \quad \quad \lambda(n-1-d) - \lambda(n-1-d') > \lambda(d') -
\lambda(d) + 1$$ for any $d,d' \in \{0,\ldots, n-1\}$ such that
$d<d'<n-1-d$.  

Let us first show that this statement follows from the inequality 
$$(2) \quad \quad \lambda(t)-\lambda(t-1) >\lambda(t-1)+1 \mbox{ for all } t \geqslant 2.$$
Indeed, assuming $(2)$, fix $d \in \{0,\ldots, n-1\}$ and $d'$ satisfying $d<d'<n-1-d$. Since $d' \geqslant d+1$, we have $n-1-d' \leqslant n-1-(d+1)$, and since $\lambda$ is an increasing function this implies
\[\lambda(n-1-d) - \lambda(n-1-d') \geqslant \lambda(n-1-d) - \lambda(n-1-(d+1)).\]
Now put 
$t = n-1-d$. Note that $t \geqslant 2$. Then $t-1 = n-1-(d+1)$,
and $(2)$ implies 
\[\lambda(n-1-d) - \lambda(n-1-(d+1)) > \lambda(n-1-(d+1)) + 1.\]
 
As $d' < n-1-d$, we have  $d' \leqslant n-1-d-1 = n-1-(d+1)$. Once more we use the fact that $\lambda$ is a non-negative increasing function to deduce
\[\lambda(n-1-(d+1)) + 1 \geqslant \lambda(d')  +1\geqslant \lambda(d') - \lambda(d) +1.\]
Combining the previous  inequalities gives $(1)$. 

Therefore it remains to prove $(2)$. 
If we fix a hyperplane 
${\rm H}$ and count non-trivial linear subspaces of codimension $ \geqslant 2$ in
$\mathbf{P}^t_k$ taking into account their position with respect to
${\rm H}$ (transverse to ${\rm H}$, or of codimension $\geqslant 2$ or $ = 1$ in ${\rm H}$), we obtain for $t \geqslant 2$
$$\lambda(t) = \nu(t) + \lambda(t-1) + \#~\mathbf{P}^{t-1}(k) > \nu(t)
+ \lambda(t-1) + 1,$$ where $\nu(t)$ denotes the number of non-trivial
linear subspaces of codimension at least $2$ in $\mathbf{P}^t_k$ which
are not contained in ${\rm H}$. Hence, it is enough to prove the inequality $$\nu(t) \geqslant \lambda(t-1)$$ for every integer $t \geqslant 2$. But this is obvious: given a hyperplane $\mathbf{P}^{t-1}_k \subset \mathbf{P}^t_k$ and a rational point $p$ in the complement of $\mathbf{P}^{t-1}(k)$, the map ${\rm L} \mapsto \langle {\rm L}, p\rangle$ embeds the set of codimension $d$ linear subspaces of $\mathbf{P}^{t-1}_k$ into the set of codimension $d$ linear subspaces of $\mathbf{P}^t_k$ which are not contained in $\mathbf{P}^{t-1}_k$. \end{proof}

\vskip4mm
We can now prove Theorem \ref{main-theorem}.

\smallskip 

Let us first show part (i).
Every $k$-automorphism $\varphi$ of $\Omega(\V)$ extends to a $k$-automorphism $\widetilde{\varphi}$ of ${\rm X}$ by Proposition \ref{prop-analytic}.
Hence it induces a permutation $\hat{\varphi}$ of non-trivial linear subspaces of $\mathbf{P}({\rm V})$ defined by $\widetilde{\varphi}({\rm E}_{\rm L}) = {\rm  E}_{\hat{\varphi}({\rm L})}$. Note that $\hat{\varphi}$ preserves the simplicial structure of flags in $\mathcal{L}({\rm V})$ because $\widetilde{\varphi}$ preserves the simplicial structure of strata of the boundary divisor. By Proposition \ref{prop-valuations} (ii) it suffices to prove that $\hat{\varphi}$ preserves \emph{hyperplanes}.

\vskip3mm

We argue by induction on $n = {\rm dim}~{\rm V} - 1 \geqslant 1$.
For $n=1$, the result is obvious.
For $n=2$, it is enough to compare self-intersections of components of
${\rm D}$ to conclude: for a point $p$ and a line $\ell$,
$${\rm deg}~[{\rm E}_p]^2 = - 1 \ \ \ {\rm and} \ \ \ {\rm deg}~[{\rm E}_\ell]^2 = {\rm deg}~\left(h - \sum_{q \in \ell(k)} [{\rm E}_{q}] \right)^2 = 1 - \#~\ell(k) = - (\#~k),$$
thus $\hat{\varphi}$ maps a line to a line.

\vskip3mm
In general, for any rational hyperplane ${\rm H}$ of $\mathbf{P}({\rm V})$, it follows from Lemma \ref{lemma-rank} that $\hat{\varphi}({\rm H})$ is either a hyperplane or a rational point.
Let us now assume that $n$ is at least $3$ and that the theorem has been proved in lower dimension.
If $\hat{\varphi}({\rm H})$ is a rational point $p$, then $\widetilde{\varphi}$ induces a $k$-isomorphism $\bar{\varphi}$ between ${\rm E}_{\rm H}$ and ${\rm E}_p$ which maps the divisor ${\rm D}_{\rm H} = \bigcup_{{\rm L} \neq {\rm H}} {\rm E}_{\rm H} \cap {\rm E}_{\rm L}$ onto the divisor ${\rm D}_p = \bigcup_{{\rm L} \neq \{p\}} {\rm E}_p \cap {\rm E}_{\rm L}$. 

\vskip1mm Since ${\rm E}_{\rm H}$ (resp. ${\rm E}_p$) is the blow-up of ${\rm H}$ (resp. $\mathbf{P}({\rm T}_p^\vee)$, where ${\rm T}_p$ denotes the tangent space of $\mathbf{P}({\rm V})$ at $p$) along the full hyperplane arrangement, with exceptional divisor ${\rm D}_{\rm H}$ (resp. ${\rm D}_p$), the theorem in dimension $n-1$ implies that $\bar{\varphi}$ is induced by a $k$-isomorphism between ${\rm H}$ and $\mathbf{P}({\rm T}_p^\vee)$, hence maps the components of ${\rm D}_{\rm H}$ defined by rational points of ${\rm H}$ to components of ${\rm D}_p$ defined by rational points of $\mathbf{P}({\rm T}_p^\vee)$, which is to say by (rational) lines in $\mathbf{P}({\rm V})$ containing $p$.

\vskip1mm Let $q$ be a rational point of ${\rm H}$ and let $\ell$
denote the line in $\mathbf{P}({\rm V})$ such that $$\widetilde{\varphi}({\rm
  E}_{\rm H} \cap {\rm E}_q) = {\rm E}_p \cap {\rm E}_\ell.$$
The two hypersurfaces ${\rm E}_\ell$ and $\widetilde{\varphi}({\rm
  E}_q)$ have the same non-empty intersection with $\widetilde{\varphi}({\rm
  E}_{\rm H}) = {\rm E}_p$, so $$\widetilde{\varphi}({\rm E}_q) = {\rm
  E}_\ell$$ since ${\rm D}$ is a normal crossing divisor. By Lemma \ref{lemma-rank}, this implies $n=2$ while we assumed $n \geqslant
3$. 

Therefore, $\hat{\varphi}$ preserves the set of hyperplanes.

\vskip2mm \noindent {\bf Remark} --- Carlo Gasbarri suggested that it should be possible to prove that $\hat{\varphi}$ preserves hyperplanes by looking at the canonical divisor on ${\rm X}$, which is a fixed point of $\widetilde{\varphi}^*$ in ${\rm CH}^1({\rm X})$. We sketch a way to combine this idea with results of Section 3. Using the classical formula for the canonical divisor of a blow-up \cite[Exercice II.8.5]{Hartshorne}, we obtain \begin{equation}{\rm K}_{\rm X} = \pi^*{\rm K}_{{\bf P}({\rm V})} + \sum_{{\rm L} \in \mathcal{L}({\rm V})} ({\rm codim}~{\rm L} - 1) [{\rm E}_{\rm L}] = -(n+1)h + \sum_{i=0}^{n-2} (n-i-1) \sum_{{\rm L} \in \mathcal{L}^i({\rm V})} [{\rm E}_{\rm L}].\end{equation} 
Let $\Gamma$ denote the subgroup of ${\rm CH}^1({\rm X})$ spanned by $\{[{\rm E}_{\rm L}]\}_{{\rm codim}~{\rm L} \geqslant 2}$. For any hyperplane ${\rm H}$, we have : $$\widetilde{\varphi}^*h = \widetilde{\varphi}^*\left([{\rm E}_{\rm H}] + \sum_{{\rm L} \subsetneq {\rm H}} [{\rm E}_{\rm L}]\right) = [{\rm E}_{\hat{\varphi}^{-1}({\rm H})}] + \sum_{{\rm L} \subsetneq {\rm H}} [{\rm E}_{\hat{\varphi}^{-1}({\rm L})}].$$ Since $\hat{\varphi}^{-1}$ preserves the simplicial structure of $\mathcal{L}({\rm V})$, it maps the link of ${\rm H}$ to the link of ${\rm  W} = \hat{\varphi}^{-1}({\rm H})$, hence linear subspaces of ${\rm H}$ to linear subspaces of ${\bf P}({\rm V})$ contained in or containing ${\rm W}$.  Since there are $\# {\bf P}({\rm V}/{\rm W})(k)$ hyperplanes containing ${\rm W}$, we obtain $$\widetilde{\varphi}^*h \equiv \#{\bf P}({\rm V}/{\rm W})(k)h \ \ ({\rm mod} \ \Gamma).$$ In particular, ${\rm dim}~\hat{\varphi}^{-1}({\rm H})$ does not depend on the hyperplane ${\rm H}$. Together with Lemma \ref{lemma-rank}, this observation implies that $\hat{\varphi}$ either preserves hyperplanes or swaps hyperplanes and points. 

\vskip1mm Assume that $\hat{\varphi}$ swaps hyperplanes and points. Then $${\rm K}_{\rm X} \equiv \widetilde{\varphi}^*{\rm K}_{\rm X} \equiv -(n+1) \widetilde{\varphi}^*h + (n-1) \sum_{p \in {\bf P}({\rm V})(k)} \widetilde{\varphi}^*[{\rm E}_p] \equiv \left(-(n+1)\#{\bf P}^{n-1}(k) + (n-1)\#{\bf P}^n(k)\right)h$$ modulo $\Gamma$, so Equation (1) implies $$(n+1)(q^n-q) = (n-1)(q^{n+1}-1) $$ with $q = \#k$. This identity cannot hold if $n>1$ since it would imply $q|n-1$, hence $n \geqslant q+1 \geqslant 3$ and $(n+1)/(n-1) \leqslant 2$, whereas $(q^{n+1}-1)/(q^n-q) > q \geqslant 2$. Therefore, $\hat{\varphi}$ has to preserve hyperplanes.

\section{Extension of the ground field}
We now indicate how to prove the second part of Theorem
\ref{main-theorem}. For every field extension ${\rm K}/k$, the base
change $\Omega({\rm V})_{\rm K}$ of $\Omega({\rm V})$ coincides with
the complement in $\mathbf{P}({\rm V})_{\rm K}$ of all
$k$-\emph{rational} hyperplanes. Since blowing-up commutes with base change, the
${\rm K}$-scheme $\X_{\rm K} = \X \otimes_k {\rm K}$ can be obtained
by blowing up $\mathbf{P}({\rm V})_{\rm K}$ along the
arrangement of all $k$-rational hyperplanes. Moreover, every
irreducible components $ {\rm E}_{\rm L}$ of ${\rm D}$ is
geometrically irreducible, and its base change $({\rm E}_{\rm L})_{\rm
  K}$ is the irreducible component of ${\rm X}_{\rm K} - \Omega({\rm
  V})_{\rm K}$ corresponding to the $k$-rational linear subspace ${\rm L}_{\rm K}$ of $\mathbf{P}({\rm V})_{\rm K}$.

\vskip1mm Let us consider a ${\rm K}$-automorphism $\varphi$ of
$\Omega({\rm V})_{\rm K}$. One proves exactly as in Proposition
\ref{prop-valuations} that $\varphi$ extends to a ${\rm
  K}$-automorphism of ${\rm X}_{\rm K}$ (resp. of $\mathbf{P}({\rm
  V})_{\rm K}$) if and only if $\varphi$ preserves the simplicial set
$\Gamma({\rm V}_{\rm K})$ of discrete valuations on $\kappa({\rm
  V}_{\rm K})$ coming from irreducible components of ${\rm D}_{\rm K}$
(resp. preserves the subset of $\Gamma({\rm V}_{\rm K})$ corresponding
to hyperplanes). Once again, this condition is established via analytic
geometry over the field ${\rm K}$ endowed with the trivial absolute
value. The key point is Lemma \ref{lemma-fan}, which holds for the fan
$\mathfrak{S}({\rm V}_{\rm K})$ of the normal crossing divisor ${\rm
  D}_{\rm K}$ on ${\rm X}_{\rm K}$. The proof works verbatim, but one could
also argue that $\mathfrak{S}({\rm V}_{\rm K})$ coincides with the
inverse image of $\mathfrak{S}({\rm V})$ under the projection map $p :
{\rm X}^{\rm an}_{\rm K} \rightarrow {\rm X}^{\rm an}$, so the statement holds for $\mathfrak{S}({\rm V}_{\rm K})$ since it holds for
$\mathfrak{S}({\rm V})$. We then prove as above that $\varphi$ extends
to a ${\rm K}$-automorphim of ${\rm X}_{\rm K}$.

\vskip1mm Lemma \ref{lemma-chow} and Lemma \ref{lemma-rank} 
also apply to ${\rm X} \otimes_k {\rm K}$, when we replace ''linear subspaces''
by ''$k$-rational linear subspaces''. It follows that the permutation of $k$-rational linear subspaces induced by
$\widetilde{\varphi}$ perserves the hyperplanes, hence $\varphi$
induces a ${\rm K}$-automorphism of $\mathbf{P}(\V)_{\rm K}$. This
automorphism preserves the set of $k$-rational hyperplanes. Pick a
$k$-basis of ${\rm V}$ and consider the corresponding coordinate
hyperplanes; since they are mapped to $k$-rational hyperplanes,
$\varphi$ is induced by a $k$-automorphism of $\mathbf{P}({\rm V})$.

\end{document}